\def\Z{{\mathbb Z}}
\def\R{{\mathbb R}}
\def\K{{\mathbb K}}
\def\w{\mathcal{W}}
\def\cc{\mathcal{C}}
\def\dd{\mathcal{D}}
\def\pp{\mathcal{P}}
\def\bb{\mathcal{B}}
\def\b{\beta}
\def\r#1{\mathrm{#1}}
\def\mc#1{\mathcal{#1}}
\def\mc#1{\mathcal{#1}}
\def\w{\mathcal{W}}
\def\ainf{A_\infty}
\def\z2{\Z / 2\Z}
\def\id{\mathrm{id}}
\newtheorem{lem}{Lemma}
\newtheorem{alem}{Lemma}[section]
\newtheorem{prop}[lem]{Proposition}
\newtheorem{thm}[lem]{Theorem}
\newtheorem*{mainthm}{Main Theorem}
\newtheorem{cor}[lem]{Corollary}
\newtheorem{defn}[lem]{Definition}
\newtheorem{ques}[lem]{Question}
\theoremstyle{remark}
\newtheorem{rem}[lem]{Remark}
\numberwithin{equation}{section}
\begin{document}

\begin{abstract} 

    In all known explicit computations on Weinstein manifolds, the self-wrapped Floer homology of non-compact exact Lagrangian is always either infinite-dimensional or zero.  We show that a global variant of this observed phenomenon holds in broad generality: the wrapped Fukaya category of any positive-dimensional Weinstein (or non-degenerate Liouville) manifold is always either non-proper or zero, as is any quotient thereof. Moreover any non-compact connected exact Lagrangian is always either a ``(both left and right) non-proper object" or zero in such a wrapped Fukaya category, as is any idempotent summand thereof. We also examine criteria under which the argument persists or breaks if one drops exactness, which is consistent with known computations of non-exact wrapped Fukaya categories which are smooth, proper, and non-vanishing (e.g., work of Ritter-Smith). 

\end{abstract}

\def\oc{\mc{OC}}
\def\co{\mc{CO}}
\def\b{\mc{B}}
\def\z{ { \mathbb Z}}
\def\hhmf{\mathrm{R} \Gamma (\wedge^\bullet T_Y, [W - y, \cdot])}
\title{Categorical non-properness in wrapped Floer theory}
\author{Sheel Ganatra}
\maketitle
\section{Introduction}

The wrapped Fukaya category $\w(X)$ is an important invariant of a Liouville (exact
symplectic convex at infinity) manifold $X$, whose objects are (exact,
cylindrical at infinity) Lagrangian submanifolds and whose cohomological
morphisms are {\em wrapped Floer homology} groups.  Unlike the ordinary
Lagrangian Floer cohomology between a pair of Lagrangians, wrapped Floer homology allows
further contributions to the chain complex beyond just (the finitely many
transverse) intersection points of the Lagrangians, coming from Reeb chords
between the Legendrian boundaries at infinity.
The presence of chords in these complexes means that wrapped Floer homology could be infinite dimensional, but doesn't force infinite-dimensionality.\footnote{e.g., if there are only finitely many chords or the differential involves many cancellations} 
Indeed one of the motivations for their introduction 
\cite{abouzaidseidel} was precisely to mirror Ext groups of sheaves on
singular or non-compact varieties, which could be infinite
dimensional. This paper is concerned with the question of to what degree such wrapped Floer homology groups
{\em must} be infinite dimensional (if they are non-zero).

In recent years, there has been an
explosion of computations of wrapped Fukaya categories, especially in the
setting of {\em Weinstein manifolds} (those Liouville manifolds for which the
Liouville vector field $Z$, the symplectic dual of the primitive of the
symplectic form is gradient like for a Morse exhaustion function), where we
have strong general control of the category.  In all of the many known
computations to date on Weinstein manifolds (to the author's knowledge), the
following phenomenon occurs: the self-wrapped Floer cohomology of a non-compact
exact
(cylindrical at infinity) Lagrangian is either infinite dimensional or zero. 
An old folk question inquires
whether this could be always the case: 
\begin{ques}[Folk]\label{infiniteorzero}
    Let $L$ be a non-compact connected exact Lagrangian with Legendrian ends in a Liouville manifold $X$. Must the self-wrapped Floer homology $HW^*(L,L)$ be either infinite dimensional or zero?
\end{ques}
\begin{rem}
Question \ref{infiniteorzero} is very quickly false if one drops the connected
hypotheses (take a compact Lagrangian union a non-compact Lagrangian with zero wrapped Floer homology)
When considering the more general question of when $HW^*(K,L)$ could be finite non-zero, note that $HW^*(K,L)$ coincides with ordinary Floer cohomology $HF^*(K,L)$ (which is finite and could be non-zero) when one of $K$ or $L$ is compact, or more generally when $K$ and $L$ have Legendrian boundaries on different components at infinity.
However, the groups $HW^*(K,L)$ do appear to `frequently' be infinite for non-compact $K$, $L$.
\end{rem}

Question \ref{infiniteorzero} is the open-string analogue of (and generalizes by setting $L =
\Delta$) an even older and more well known folk question, about the nature of the often studied {\em symplectic
cohomology} $SH^*(X)$.  We recall that $SH^*(X)$, by definition the (Hamiltonian) Floer cohomology of a Hamiltonian with sufficiently large growth near infinity or the limit of Floer cohomologies of Hamiltonians which increasingly grow near infinity, is generated (for a nice choice of Hamiltonian) by the co-chains of $X$, a subcomplex, along with Reeb orbits on the boundary at infinity of $X$. 
\begin{ques}[Folk]\label{infiniteorzeroSH}
    For a non-compact Liouville manifold $X$, must $SH^*(X)$ be either infinite dimensional or zero?
\end{ques}
There are reasons to believe Questions \ref{infiniteorzero} and \ref{infiniteorzeroSH} 
are either rather difficult, or maybe have negative answers.
For one, if the answer to Question \ref{infiniteorzeroSH} is yes in general, it would follow
that Viterbo's ``acceleration map'' $H^*(X) \to SH^*(X)$ (induced by the inclusion of a subcomplex in the chain model mentioned above) cannot be an isomorphism (the so-called ``algebraic Weinstein conjecture''), further implying  
Weinstein's conjecture on the existence of a Reeb orbit on the boundary-at-infinity of $X$ for any non-degenerate contact form (which is currently unknown in this generality).  Similarly, an affirmative answer to Question \ref{infiniteorzero} for any $L$ would imply Arnold's chord conjecture on the existence of a Reeb self-chord on the Legendrian $\partial_{\infty} L$.
Even in the case of a cotangent bundle where one can reduce to algebraic topology via the isomorphism $SH^*(T^*Q) \cong H_{n-*}(\mc{L} Q) \neq 0$ \cite{viterboloop} where $\mc{L}Q$ denotes the free loop space\footnote{implicitly using twisted coefficients on one or both sides in case $Q$ is not Spin  \cite{kraghspin}*{Rmk. 1.3}} Question \ref{infiniteorzeroSH}, which then asks whether $H_{*}(\mc{L} Q)$ must always be infinite-dimensional for a closed $Q$, is to our knowledge unknown for all $Q$\footnote{though by direct computation it is known in many special cases, e.g., when $Q$ is simply connected or when $\pi_1 Q$ has infinitely many conjugacy classes; compare the discussion in \cite{ritterTQFT}*{\S 12.5}}. In all explicitly computed examples on Weinstein manifolds (again to our understanding) the alternatives posed in Questions \ref{infiniteorzero} and \ref{infiniteorzeroSH} hold. If one drops exactness however, the alternatives are known to fail, even if one retains non-compactness of the target (see e.g., \cite{ritter_negativeSH,rittersmith}).

Nevertheless, our main result is that a more global version of
the empirically noticed ``infinite or zero'' phenomenon in fact holds in broad generality for structural reasons. To state the main result, recall from \cite{ganatra1_arxiv} that a Liouville manifold is said to be {\em non-degenerate} if there exists a collection of Lagrangians in the wrapped Fukaya category satisfying Abouzaid's generation criterion \cite{abouzaid_generation}. We will treat this hypothesis, which has strong implications \cite{ganatra1_arxiv} (reviewed later), as a black box. A series of works implies every Weinstein manifold is non-degenerate \cite{CDGG, GPSstructural, ganatra1_arxiv, gao}. 

\begin{mainthm}
    \label{thm:nonproper}
    Let $X$ be a Weinstein manifold (or more generally a non-degenerate Liouville manifold in the sense of \cite{ganatra1_arxiv}) of dimension greater than zero.
    Then 
    \begin{enumerate}
        \item[(1)] The wrapped Fukaya category $\w(X)$ is either non-proper or it is zero. More generally, 
        \item[(1')] Any quotient of $\w(X)$ is non-proper or zero, as is more generally any category admitting a homological epimorphism from $\w(X)$.
        \item[(2)] Any open exact Lagrangian submanifold $L \in \w(X)$ (i.e., all components of $L$ are non-compact) is either a (both right and left) non-proper object or it is zero. More generally, 
        \item[(2')] Any idempotent summand (in the pre-triangulated split-closure $perf(\w(X))$) of an $L$ as in (2) is either (both right and left) non-proper or it is zero.
    \end{enumerate}
\end{mainthm}
Recall that an $\ainf$ category $\cc$ is {\em proper} over a field $\K$ if $\hom(X,Y) \in perf(\K)$ for all pairs of objects $X,Y \in \cc$ where, denoting by $Mod(\K)$ the category of all chain complexes over $\K$, $perf(\K) \subset Mod(\K)$ denotes the full subcategory of chain complexes with finite-dimensional total cohomology. An object $L \in \cc$ is {\em right (respectively left) proper} if $\hom_\cc(X,L)$ (respectively $\hom_{\cc}(L,X)$) lands in $perf(\K) \subset Mod(\K)$ for every $X \in \cc$. Properness of a category, or of a (left or right) module over a category such as the Yoneda modules $\hom_{\cc}(L,-)$ and $\hom_{\cc}(-,L)$, is a Morita invariant notion, inherited by (split-)generating subcategories. Statement (1) of the Main Theorem therefore implies that if $\w(X)\neq 0$ there exist $K, L \in \w(X)$ with $HW^*(K,L)$ infinite rank, including at least one such pair in any split-generating collection of objects. The more general (1') further precludes phenomena like the existence of a non-trivial proper orthogonal summands of the wrapped Fukaya category, which could a priori exist on a non-proper category. (2) correspondingly says that for any open exact $L$ which is not zero, there exist a $K, K'$ with $HW^*(K,L)$ and $HW^*(L,K')$ of infinite rank, including one such $K$ and $K'$ in any split-generating collection. (2') further prohibits such an $L$ from having a non-trivial idempotent summand which is right or left proper (in contrast, any compact exact Lagrangian $Q$ is both non-zero and right and left proper).
As explained in Corollary \ref{zeroorextalternative} and Remark \ref{zeroorexactcounterexample}, the Main Theorem also implies an alternative for the groups $SH^*(X)$ and $HW^*(L,L)$ (under the stated hypotheses) which is slightly weaker than, but partly explains the persistence in all known examples of, the alternatives raised by Questions \ref{infiniteorzeroSH} and \ref{infiniteorzero}.

\begin{rem}[Comparing Statements (1) and (2)]
    To compare Statements (1) and (2) of the Main Theorem in an instance,
    let's suppose $X$ has a (split-)generating collection $\{\Delta_i\}_{i=1}^k$,
    Then Statement (1) implies that if $\w(X)\neq 0$ at least one of the groups $HW^*(\Delta_r, \Delta_s)$ is infinite rank, and in particular 
    $HW^*(\coprod_i \Delta_i, \coprod_i \Delta_i):= \oplus_{r,s} HW^*(\Delta_r, \Delta_s)$ is infinite or zero, answering Question \ref{infiniteorzero} affirmatively for $L = \coprod_i \Delta_i$.

    Let's further assume that in the split-generating collection $\{\Delta_i\}_{i=1}^k$, all of the objects are open submanifolds (e.g., take the cocores of some Weinstein presentation which generate by \cite{CDGG, GPSstructural}) and furthermore discard all zero objects from the collection (so $\w(X) = 0$ iff $k=0$). Statement (2) then implies the stronger result that at least $k$ of the groups $\{HW^*(\Delta_r, \Delta_s)\}_{r,s=1}^k$ must be infinite rank, at least one for each $r$ and each $s$.\footnote{There are examples where exactly $k$ of the $HW^*(\Delta_r, \Delta_s)$ are infinite, e.g., when $HW^*(\Delta_i,\Delta_j) = 0$ for $i \neq j$.}

More generally, 
we see that Statement (1) actually follows from from Statement (2) provided one knows that if $\w(X) \neq 0$, at least one of the non-zero generators of $X$ must be a non-compact Lagrangian $L$, a fact which is true for Weinstein manifolds and more generally for all non-degenerate Liouville manifolds by known geometric arguments (for disc confinement reasons, Abouzaid's criterion cannot be satisfied by compact Lagrangians alone, compare \cite{Gcircleactions}). 
One can similarly deduce Statement (1') from a strengthened form of (2) whose statement is omitted from the theorem for simplicity (one shows the image of such an $L$ under categorical quotients is non-proper or vanishing) whose proof is also in this paper (see e.g., Lemma \ref{openobjectquotient}), along with similar checkable hypotheses about non-compact generators. Alternatively, e.g., Statement (1) can be extracted from Statement (2) applied to the diagonal $\Delta \subset X^- \times X$ (making this perspective precise requires some technical digressions which we opted to avoid here).
Although (2) and variants therefore imply (1) and variants, we nevertheless found it cleanest to state (and give superficially independent arguments for) (1) and (2),  to distinguish ``global'' (whole category) statements from ``local'' (single object) ones.
\end{rem}

\begin{rem}
    For Weinstein manifolds, one geometric source of quotients (resp. more general homological epimorphisms) comes from restricting to Weinstein subdomains \cite{GPSstructural}*{Prop. 8.15} (resp. Liouville subdomains which are independently Weinstein \cite{sylvanepi});  Statement (1') for such quotients/homological epimorphisms  is simply a consequence of Statement (1) for the subdomain.  
\end{rem}

\begin{rem}
    From a mirror symmetry lens Statement (1') implies that if there is an HMS equivalence $\w(X) = coh(Y)$, then no component of $Y$ can be simultaneously smooth and proper.
\end{rem}

The proofs of the statements in the Main Theorem appeal to the structure and properties of some TQFT operations in wrapped Floer theory, particularly the ``degeneracy'' of the geometric closed respectively open string copairing for Liouville manifolds respectively their exact open Lagrangian submanifolds \cite{ritterTQFT}, and its interplay with the fact that (under the hypotheses imposed) $\w(X)$ is a {\em (weak) smooth Calabi-Yau category} \cite{ganatra1_arxiv}.\footnote{$\w(X)$ further has a (non-weak) smooth Calabi-Yau structure \cite{Gcircleactions}, but we only use the weak structure here.} Broadly the proof of (1), respectively (2) goes as follows. Suppose $\w(X)$ (resp. $L \in \w(X)$) is proper (resp. left or right proper). Then, we observe that (given $X$ is non-degenerate) there must exist a pairing on $SH^*(X)$ (respectively $HW^*(L,L)$) that is compatible with the copairings in the sense of satisfying the snake relation in TQFT. 
This implies the copairing is non-degenerate which (by simultaneous degeneracy) implies $\w(X)$ (respectively $L$) is zero. The stronger (1') and (2') follow from translating the degeneracy of geometric copairings into the degeneracy of certain algebraic copairings which exist on $\w(X)$ respectively $L$ under the stated hypotheses, a property (here called {\em algebraic openness}) that both implies the non-properness or vanishing alternative and is inherited by quotients (respectively summands).

One can ask to what degree the Main Theorem holds in other studied non-compact settings\footnote{in compact cases, Fukaya categories are proper by construction}. If one turns on a stop, {\em partially wrapped Fukaya categories} (or wrapped Fukaya categories of Liouville {\em sectors} like $T^* \R^n$) can be proper non-zero (it may be interesting to articulate a relevant criterion which sometimes fails and sometimes holds). 
Another question is to what extent the result holds in non-exact (non-compact) cases when all of the structures are defined. Here we note that either of the key properties used, non-degeneracy of $\w(X)$ and the degeneracy of the copairings, could independently fail or hold in non-exact non-compact settings. For instance, as spelled out in Remark \ref{rem:degeneracylemmanonexact}, work of Albers-Kang \cite{alberskang} implies the failure of degeneracy of the copairing for negative line bundles, making our arguments inapplicable in such settings. Computations by Ritter/Ritter-Smith \cite{ritter_negativeSH, rittersmith} show that the outcome of Main Theorem indeed does also fail in such settings, despite a version of nondegeneracy holding.  
See Remarks \ref{rem:degeneracylemmanonexact}, \ref{rem:degeneracylemmanonexactlagrangian}, and \ref{rem:nonexact} for more exploration of non-exact settings, including structural hypotheses under which the result would continue to hold.

The paper is organized as follows. In \S \ref{sec:degeneracylem}, we review the definition of the geometric closed and open-string copairings and their degeneracy property, following \cite{ritterTQFT}*{Thm 13.3} with a slightly different exposition. In \S \ref{sec:copairingcy}, we turn towards the more abstract setting of weak smooth Calabi-Yau categories and review the fact that (as a special case of the idea that these categories carry suitable open-closed TFT operations) such categories carry algebraic ``closed-string'' and ``open-string'' copairings (on Hochschild homology and the endomorphisms of any object respectively). We also show that --- when these copairings are degenerate in a suitable sense --- the category/object in question, and in fact any quotient/summand thereof, must be non-proper or zero, due to the automatic existence of a dual compatible pairing when proper (as constructed in \cite{shklyarov_pairing, bravdyckerhoff}, though the object case requires some further arguments spelled out here). In \S \ref{sec:geometrytoalgebra} we tie the threads together and recall that the wrapped Fukaya category indeed has a canonical (weak) smooth Calabi-Yau structure \cite{ganatra1_arxiv}, whose associated algebraic closed and open-string copairings recover the geometric closed and open-string copairings (and are therefore degenerate) by \cite{rezchikov} and \cite{ganatra1_arxiv} respectively (again with some further spelling out in the open-string case). The proof of the Main Theorem appears in \S \ref{sec:proof}. An Appendix collects a few additional needed results about weak smooth Calabi-Yau structures which we expect are well known (but for which we could not find a precise reference).

\subsection*{Conventions}
For simplicity in our main argument below, we work over a fixed arbitrary field $\K$;
however, the Main Theorem also holds over $\Z$ (where properness is defined in terms of taking values in perfect dg $\Z$-modules in $perf(\Z) \subset Mod(\Z)$, the subcategory of those chain complexes which are quasi-isomorphic to a bounded complex of finitely-generated free abelian groups), as can be most simply deduced from the case of fields by the universal coefficient theorem.
We are agnostic about grading structures, e.g.,
Floer-theoretic and categorical structures in sight could be $\Z/2$ graded or $\Z$ graded
or have some other grading (degrees and degree shifts should be interpreted mod 2 in the 
former case).  While at one point in \S \ref{sec:degeneracylem} we appeal to cohomological degrees, this degree need not
coincide with gradings in wrapped Floer theory. We indicate the $\K$-linear
dual of a vector space $V$ by $V^{\vee}$.
All (dg or $\ainf$) categories considered in this paper are (cohomologically graded and) cohomologically
unital, meaning there are cohomological identity morphisms.

\subsection*{Acknowledgments}

I am grateful to Mohammed Abouzaid, Alexander Efimov, Yuan Gao, Oleg Lazarev, Yank\i\ Lekili,
Daniel Pomerleano, Vivek Shende, Zachary Sylvan, Alex Takeda, Sara Venkatesh,
and Wai-Kit Yeung for helpful conversations and comments. I was partly
supported by NSF grant DMS--1907635.

\section{Degeneracy of the geometric copairing}\label{sec:degeneracylem}

In this section we review the construction of geometric copairings on symplectic cohomology and wrapped Floer cohomology, along with their relevant degeneracy property in the open exact case, a key ingredient in the main result. The formulation of the degeneracy property also makes use of unital algebra structures on these groups.

First, let us recall that the symplectic cohomology $SH^*(X)$ of any Liouville manifold
admits various TQFT operations coming from counting maps from surfaces with at least one output \cite{ritterTQFT}. The pair of pants determines a {\em algebra} with {\em unit} given by the count of a cap (our convention following Ritter is that these operations are in degree zero).  
It also has a canonical {\em (geometric) closed-string copairing}
\[
    c_{SH}: \K \to SH^*(X) \otimes SH^*(X)[2n].
\]
given by counting cylinders where both ends are thought of as an output. 
There does not typically exist a pairing (or more generally any TQFT operations
coming from surfaces with no outputs), a feature of the non-compactness of the
situation. There exist technical problems (a `failure of the maximum principle') in setting up the moduli spaces for a pairing, reflecting the more fundamental point that if a pairing did exist, the usual snake relation in TFT would imply that both the pairing and copairing are non-degenerate and that $SH^*(X)$ is finite dimensional (which is not always true by computation). Let us recall that any copairing $c: \K \to V \otimes V[k]$ induces a map $c^*: V^{\vee}[-k] \to V$ sending $\phi \mapsto (\phi\otimes id)(c(1))$, we say $c$ is {\em non-degenerate} if this map $c^*$ is an isomorphism. 
For our purposes, the crucial feature of the closed-string copairing in the exact non-compact setting is the following {\em degeneracy property}, which in fact provides an algebraic obstruction to the existence of a compatible pairing unless $SH^*(X) = 0$:
\begin{thm}[``Degeneracy Lemma'', Ritter \cite{ritterTQFT} Thm 13.3]\label{degeneratecopairing}
    If $X^{2n}$ is Liouville, then the image of $c^*_{SH}$ of any class in $SH^*(X)^{\vee}[-2n]$ is nilpotent in $SH^*(X)$. In particular, $c_{SH}$ is non-degenerate if and only if $SH^*(X) = 0$.
\end{thm}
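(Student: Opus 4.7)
The plan is to reduce everything to the stronger assertion that $c^*_{SH}(\phi) \in SH^*(X)$ is nilpotent in the unital ring $(SH^*(X), \cdot)$ for every $\phi \in SH^*(X)^{\vee}[-2n]$. Once that is in hand, the ``iff'' is automatic: if $c_{SH}$ is non-degenerate then $c^*_{SH}$ is surjective, so in particular the unit $1 \in SH^*(X)$ lies in its image and must be nilpotent, which in any unital ring forces $1 = 0$ and hence $SH^*(X) = 0$. The reverse direction is trivial.

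The first step, which carries the geometric content, is to produce a refinement of the chain-level copairing. For a Hamiltonian $H$ that is linear at infinity with large positive slope, $c_{SH}$ is computed by counts of rigid Floer cylinders with both ends treated as outputs. A positivity-of-outputs analysis of the Floer equation at each output end with a positive-slope Hamiltonian forces the asymptotic $1$-periodic orbit to be non-constant (a Reeb-type orbit in the cylindrical end); constant orbits at interior critical points of $H$ cannot occur. Consequently, $c_{SH}$ lifts to
\[
\tilde c_{SH} : \K \to SH^*_+(X) \otimes SH^*_+(X) [2n],
\]
where $SH^*_+(X)$ denotes the non-constant-orbit (positive) part of symplectic cohomology. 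The original $c_{SH}$ is then recovered by applying to each factor the map $\iota := V \circ \delta : SH^*_+(X) \to SH^{*+1}(X)$, where $\delta$ is the connecting homomorphism in the tautological long exact sequence $\cdots \to H^*(X) \to SH^*(X) \to SH^*_+(X) \xrightarrow{\delta} H^{*+1}(X) \to \cdots$ and $V$ is Viterbo acceleration.

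Next I would extract nilpotence from this factorization algebraically. The image of $\iota$ lies in $V(H^{\geq 1}(X))$ once one checks that the unit class $1 \in H^0(X)$ (represented by a minimum of $H$ in the interior) is not in the image of $\iota$ applied to non-constant-orbit classes --- a chain-level verification that the relevant connecting map is blind to degree-zero constant orbits. Since $V$ is a unital ring homomorphism, the product of $N$ classes in $V(H^{\geq 1}(X))$ equals $V$ of a class in $H^{\geq N}(X)$, which vanishes for $N > 2n$ as $X$ is $2n$-dimensional. Thus every $c^*_{SH}(\phi)$ is nilpotent of order at most $2n+1$, completing the argument.

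The main obstacle is the geometric factorization in step one: rigorously verifying that the moduli spaces of cylinders used to build $c_{SH}$, together with their Gromov/SFT-type compactifications in the limit of slopes tending to infinity, never admit a constant-orbit asymptote at an output end. This is a delicate positivity-of-action argument that relies essentially on the non-compactness of $X$ (and is the actual content of Ritter's Theorem 13.3 in \cite{ritterTQFT}); all other components of the proof --- the long exact sequence comparison, the chain-level identification of the unit, and the degree count in $H^{\geq 1}(X)$ --- are routine once the factorization is established.
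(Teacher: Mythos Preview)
Your geometric factorization in step one is backwards, and this propagates into a fatal algebraic error. The copairing $c_{SH}$ does \emph{not} lift to $SH^*_+(X)\otimes SH^*_+(X)$; rather, it factors through the \emph{constant} orbits. Concretely (this is Ritter's Theorem~6.10), the two-output cylinder can only be made to satisfy the maximum principle if the output slopes are small, so one first lands in $HF^*(\epsilon)\otimes HF^*(\epsilon)\cong H^*(X)\otimes H^*(X)$ and then applies continuation; equivalently, $c_{SH}=(V\otimes V)(c_{top})$ where $c_{top}=\Delta_!(1)\in H^*(X)\otimes H^*(X)[2n]$ is the topological copairing. Your claimed ``positivity-of-outputs'' mechanism forcing Reeb-type asymptotics does not exist: output ends in $SH^*$ are perfectly happy to land on interior critical points (the unit itself is produced by exactly such a cap).

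The algebraic step then collapses for an independent reason: the map you write down, $\iota=V\circ\delta:SH^*_+(X)\to SH^{*+1}(X)$, is \emph{identically zero} by exactness of the very long exact sequence you quote (the image of $\delta$ is the kernel of $V$). So your factorization, were it true, would prove $c_{SH}=0$ for every Liouville $X$, which is strictly stronger than the statement and false in general. The paper's route instead reads off from the correct factorization that $c^*_{SH}$ factors as
\[
SH^*(X)^{\vee}[-2n]\xrightarrow{V^{\vee}} H^*(X)^{\vee}[-2n]\cong H^*_c(X)\longrightarrow H^*(X)\xrightarrow{V} SH^*(X),
\]
and nilpotence comes from the purely topological fact that the image of $H^*_c(X)\to H^*(X)$ lies in positive cohomological degree (here is where non-compactness of $X$ actually enters), hence is cup-nilpotent; $V$ being a unital ring map then finishes. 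Your degree-counting endgame is morally this same idea, but attached to the wrong factorization.
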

We recall the main thrust of the argument, with a somewhat different exposition:
\begin{proof}
    It is well known that the copairing in $SH^*(X)$ (which by \cite{ritterTQFT} can be computed in terms of the TQFT {\em coproduct} applied to the unit) factors through constant loops (compare \cite{ritterTQFT}*{Thm 6.10}), in the sense that it coincides with composition $\K \to H^*(X) \otimes H^*(X)[2n] \to SH^*(X) \otimes SH^*(X)[2n]$ where $H^*(X) \to SH^*(X)$ is Viterbo's acceleration map and the first map is the ``topological copairing'', i.e., the image of 1 under the wrong way map $\Delta_!(1)$ (here $\Delta: X \to X \times X$ is the diagonal embedding and we are implicitly identifying $H^*(X \times X) \cong H^*(X) \otimes H^*(X)$ using K\"{u}nneth). As a result, the induced map $c^*_{SH}$ factors as $SH^*(X)^{\vee}[-2n] \to H^*(X)^{\vee}[-2n] \to H^*(X) \to SH^*(X)$, where using Poincar\'e duality to identify $H^*(X)^{\vee}[-2n] \cong H^*_c(X)$, the intermediate map can be identified with the canonical map from compactly supported to ordinary cohomology. Since every component of $X$ is non-compact, the map $H^*_c(X)$ to $H^*(X)$ has image which is nilpotent with respect to the cup product (as $H^*_c(X)$ vanishes in cohomological degree zero).  
Now the map $H^*(X) \to SH^*(X)$ is an algebra map when $X$ is Liouville (using the cup product on $H^*(X)$ and the pair of pants product on $SH^*(X)$), hence preserves the condition of being nilpotent.
For the second statement: if $c^*_{SH}$ is an isomorphism, one learns that $ 1 \in SH^*(X)$ is nilpotent, hence $SH^*(X)=0$.

\end{proof}
\begin{rem}[On degeneracy of copairings and Rabinowitz Floer homology]\label{rem:rabinowitz}
    Theorem \ref{degeneratecopairing} is stated somewhat differently from the original reference \cite{ritterTQFT}*{Thm 13.3}, which makes the equivalent statement that if $RFH^*(X) = 0$ if and only if
    $SH^*(X)=0$, where $RFH^*(X)$ denotes the {\em Rabinowitz Floer homology} of $X$ as defined in \cite{CF}. We recall that
    by Poincar\'{e} duality in Floer theory, the map $c_{SH}^*$ can be identified with the {\em continuation map} $c: SH_*(X) \to SH^*(X)$ where $SH_*(X)$ is the {\em symplectic homology} (the Floer theory of a Hamiltonian of sufficiently {\em negative} growth, or the inverse limits of Floer cohomology over Hamiltonians with negative linear growth near infinity). In turn, the map $c$ appears between $SH_*(X)$ and $SH^*(X)$ in a long-exact sequence with $RFH^*(X)$ \cite{CFO}, allowing for the translation to Theorem \ref{degeneratecopairing} as stated. \\
\end{rem}

\begin{rem} \label{rem:degeneracylemmanonexact}
It's worth examining how the degeneracy Lemma (Theorem
\ref{degeneratecopairing}) could fail or alternatively succeed on a non-exact $X$ (assume one has a definition of $SH^*(X)$ with its TQFT structures), seeing as it is a key component in categorical ``non-properness or vanishing'' phenomena. If  $X$ is compact the argument breaks because the difference between $H_c^*(X)$ and $H^*(X)$ vanishes; indeed the copairing is non-degenerate in such cases as one can construct a pairing. For more general non-exact $X$ the presence of $J$-holomorphic
    spheres means that the acceleration map from $H^*(X)$ to $SH^*(X)$, a non-compact version of the PSS morphism \cite{pss}, should be an algebra map for the
{\em quantum product} (rather than classical cup product) on $H^*(X)$.
Classes of positive cohomological degree may no longer be nilpotent
for the quantum product on $H^*(X)$, which could break the argument (even if $X$ is non-compact).

On the other hand, in the absence of strictly positive Chern number holomorphic spheres, the quantum product
should only preserve or increase cohomological degree, meaning (provided each component of $X$ is non-compact)
the image of $H^*_c(X) \to H^*(X)$ should remain nilpotent and Theorem
\ref{degeneratecopairing} would seem to be applicable.  For instance, this
would seem to be the case on open non-exact symplectic manifolds $X$ for which $c_1(X) = 0$ and $SH^*(X)$ is defined (e.g., 
on a toric Calabi-Yau). We thank Abouzaid for discussions about this case.

Note that on some negative line bundles, which (are non-compact and) have positive Chern number holomorphic spheres, Theorem \ref{degeneratecopairing} indeed is known to fail: Ritter showed $SH^*(X)$ can be non-zero \cite{ritter_negativeSH} while  Albers-Kang \cite{alberskang}*{\S 4} compute that nevertheless in such cases Rabinowitz Floer homology vanishes, or equivalently $c_{SH}$ is non-degenerate (see Remark \ref{rem:rabinowitz}).
\end{rem}

Turning to the open-string setting, we recall that there is correspondingly an open TQFT structure on wrapped Floer cohomology (also developed in \cite{ritterTQFT}*{Thm. 6.13}). For any triple of exact Lagrangians $L_0, L_1, L_2 \in \w(X)$, counting maps from a disc with two inputs and one output induces a composition map in wrapped Floer cohomology $[\mu^2]: HW^*(L_1, L_2) \otimes HW^*(L_0,L_1) \to HW^*(L_0, L_2)$ a cohomological shadow of the chain-level $\ainf$ structure (see e.g., \cite{abouzaid_generation}).  Counting maps from an (unstable) disc with one output induces cohomological identity morphisms $[id_L] \in HW^*(L,L)$ for each $L$ which along with $[\mu^2]$ (restricting to $L_0 = L_1 = L_2 = L$) again equip $HW^*(L,L)$ with the structure of a unital $\K$-algebra.  There is also a {\em geometric open-string copairing} on the wrapped Floer cohomology which cohomologically gives an element
\begin{equation}\label{opencopairing}
    c_{K,L}: \K \to HW^*(K,L) \otimes HW^*(L,K)[n]
\end{equation}
defined for any pair of exact Lagrangians $K$ and $L$. We'll equate $c_{K,L}$ with $c_{K,L}(1)$ and further indicate $c_{L,L}$ by simply $c_L$. The element \eqref{opencopairing} is constructed in exactly the same manner as the closed-string copairing, by counting maps from an (unstable) disc with two outputs (or equivalently from a stable disc with two outputs and one interior point with fixed cross ratio, with the interior point unconstrained); this is a special case of the construction in e.g., \cite{ritterTQFT}*{Thm 6.13}, or a variant of the coproduct map in \cite{abouzaid_generation} with no inputs).

\begin{thm}[``Open-string degeneracy Lemma'', open-string analogue of Ritter \cite{ritterTQFT} Thm 13.3]\label{degeneratecopairingopenstring}
    Let $L \in \w(X)$ be an exact open Lagrangian (i.e., every component of $L$ is non-compact). 
    Then the image of $c^*_{L}$ of any class in $HW^*(L,L)^{\vee}[-n]$ is nilpotent in $HW^*(L,L)$. In particular, $c_{L}$ is non-degenerate if and only if $L=0$ (i.e., equivalently $HW^*(L,L) = 0$).
\end{thm}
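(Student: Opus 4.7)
The plan is to mimic the proof of Theorem~\ref{degeneratecopairing} line-by-line, with the Lagrangian $L$ replacing the ambient $X$ throughout. The first and principal step is to show that the geometric open-string copairing $c_L$ factors through constant Reeb chords, i.e., coincides with the composition
\[
    \K \xrightarrow{\Delta_!(1)} H^*(L) \otimes H^*(L)[n] \xrightarrow{\r{PSS}\,\otimes\,\r{PSS}} HW^*(L,L) \otimes HW^*(L,L)[n],
\]
where $\r{PSS}: H^*(L) \to HW^*(L,L)$ is the open-string PSS/acceleration map and $\Delta: L \to L \times L$ is the diagonal. This is the direct Lagrangian analogue of \cite{ritterTQFT}*{Thm 6.10}: in the moduli defining $c_L$ (a disc with two outputs and one interior marked point of unconstrained position, with boundary on $L$), a one-parameter PSS-type degeneration as one shrinks the interior constraint to a point-condition breaks each contributing configuration into a constant half-disc at a point of $L$ with two PSS half-tubes attached, and the resulting virtual count recovers $\Delta_!(1)$ with PSS applied to each factor.

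Dualizing the above factorization, the induced map $c^*_L$ factors as
\[
    HW^*(L,L)^\vee[-n] \xrightarrow{\r{PSS}^\vee} H^*(L)^\vee[-n] \cong H^*_c(L) \to H^*(L) \xrightarrow{\r{PSS}} HW^*(L,L),
\]
where the middle identification uses Poincar\'e--Lefschetz duality on the oriented $n$-manifold $L$ (applicable because $L$ is cylindrical at infinity, hence homotopy equivalent to a compact manifold with Legendrian boundary, so $H^*(L)$ is finite-dimensional), and the third arrow is the canonical map from compactly supported to ordinary cohomology. Since every component of $L$ is non-compact, $H^0_c(L) = 0$; thus the image of $H^*_c(L) \to H^*(L)$ lies in strictly positive cohomological degrees of $H^*(L)$, which (being supported in degrees $\leq n$) is cup-product nilpotent. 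The open-string PSS is a unital algebra homomorphism from $(H^*(L), \smile)$ to $(HW^*(L,L), [\mu^2])$ by a standard parametrized moduli/cobordism argument analogous to the closed-string statement used by Ritter, so the full image of $c^*_L$ consists of $[\mu^2]$-nilpotent elements. If $c_L$ is non-degenerate, i.e., $c^*_L$ is an isomorphism, then $1 \in HW^*(L,L)$ lies in this image and is therefore nilpotent, forcing $HW^*(L,L) = 0$ and hence $L = 0$ in $\w(X)$.

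The main technical obstacle is the first step: carrying out the PSS-type degeneration for a disc with two outputs and one free interior marked point with Lagrangian boundary on $L$, and ensuring the requisite compactness/maximum-principle arguments apply when $L$ itself is non-compact. Exactness of $L$ and cylindrical convexity at infinity provide the standard a priori $C^0$ estimates needed to rule out escape to infinity along the Lagrangian boundary, so the geometric setup should be routine (if lengthy) once one is working within the wrapped Floer theory already in use. The algebra-map property of open-string PSS (used at the end) is similarly a standard parametrized moduli argument. Granting these geometric inputs, the remainder of the proof is a direct cohomological transcription of Ritter's closed-string degeneracy argument.
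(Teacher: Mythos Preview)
Your proposal is correct and follows essentially the same approach as the paper's own proof: both arguments factor the open-string copairing through constant paths via the acceleration/PSS map, identify the middle arrow with $H^*_c(L)\to H^*(L)$ via Poincar\'e--Lefschetz duality, use openness of $L$ to conclude the image is cup-nilpotent, and then invoke that the acceleration map is a unital algebra homomorphism (from exactness of $L$). The paper's proof is terser and simply points to the closed-string argument as a template, whereas you spell out the PSS degeneration and the relevant compactness inputs, but the logic is identical.
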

\begin{proof}
    The argument is completely identical to the proof of Theorem \ref{degeneratecopairing}. The salient points are: the open-string copairing factors through constant paths, forcing the induced map $c_L^*$ to factor as $HW^*(L,L)^{\vee}[-n] \to H^*(L)^{\vee}[-n] \to H^*(L) \to HW^*(L,L)$. Since every component of $L$ is non-compact, the image of the middle map consists of nilpotent elements with respect to the cup product. Since $L$ is exact the rightmost map is an algebra map (for the cup product and $[\mu^2]$).
\end{proof}

Again this degeneracy property can be thought of as providing (unless the group in question vanishes) a obstruction to the existence of a compatible pairing.

\begin{rem}\label{rem:degeneracylemmanonexactlagrangian}
    Continuing Remark \ref{rem:degeneracylemmanonexact}, we examine how Theorem \ref{degeneratecopairingopenstring} could fail or succeed if the hypotheses are relaxed (supposing ordinary and wrapped Floer homology are still well-defined). If $L$ is a compact (say exact) Lagrangian $L$, the map  $H_c^*(L) \to H^*(L)$ is an isomorphism, breaking the above proof. Indeed, standard Floer theoretic methods allow one to construct a pairing $HF^*(L,L) \otimes HF^*(L,L) \to \K[-n]$; which can be used (along with the usual snake relation) to establish non-degeneracy of the geometric open-string copairing even though one has $HF^*(L,L) \cong H^*(L) \neq 0$. In the technical construction of such moduli spaces, one sees that there is extra freedom in the choice of ``Floer data'' which allows for operations without outputs when one or both Lagrangian is compact (without violating the maximum principle). One expects for the same reasons the failure of Theorem \ref{degeneratecopairingopenstring} to persist for compact $L$ in more general non-exact settings, provided such an $L$ has well-defined non-vanishing Floer theory.

    Supposing $L \subset X$ is non-compact and non-exact (whether $X$ is exact or not), and all of the above structures are defined. A similar argument would imply the map $c_{L}^*$ now factors through the map from $HF^*(L^-, L)$ to $HF^*(L^+, L)$ where $HF^*(L^-, L)$ is possibly a deformation (by counting discs) of $H^*_c(L)$ and $HF^*(L^+, L)$ similarly possibly a deformation of $H^*(L)$. These groups (if defined) may not coincide with ordinary (compactly supported or usual) cohomology, and even if there is a coincidence of vector spaces, the algebra structure may still be different/deformed. Theorem \ref{degeneratecopairingopenstring} may continue to hold if the groups are still $\Z$-graded or there is more generally some ``non-positivity'' control over Maslov indices of discs (paralleling Remark \ref{rem:degeneracylemmanonexact}).
\end{rem}

\section{Copairings in smooth Calabi-Yau categories}\label{sec:copairingcy}

The main goal of this section is to recall two types of natural copairings which exist on a (weak) smooth Calabi-Yau category, one on Hochschild homology (the ``closed string algebraic copairing'') and one on the endomorphisms of any object (the ``open string algebraic copairing'') whose degeneracy (in a suitable sense, which we term being ``algebraically open'') implies non-properness or vanishing phenomena. The closed string copairing was first introduced by Shklyarov \cite{shklyarov_pairing} and, though we do not know of a reference specifically studying the open-string copairing, it can be extracted as a special ``length 0'' case of known maps as we explained below.  We review basic definitions of such categories and these copairings in \S \ref{subsec:copairings}.
In \S \ref{subsec:nondegcopairing}, we review the general result that states that when such a category (respectively an object in such a category) is proper (respectively left/right proper) then one can automatically construct a pairing compatible with the (open or closed string) copairing, implying by a suitable snake relation non-degeneracy of the copairing. In the closed-string setting this latter result is again due to Shklyarov {\em loc. cit.}, and in the open-string setting we show this desired statement can be extracted with some additional work from a more general result of Brav-Dyckerhoff \cite{bravdyckerhoff} that proper objects in a smooth Calabi-Yau category inherit a ``proper Calabi-Yau structure''. In \S \ref{subsec:openness} we introduce the algebraic openness condition on a Calabi-Yau category or an object therein, show using the previous section that it implies non-properness or vanishing phenomena, and also show that it transfers along quotients and idempotent summands respectively.

\subsection{The algebraic copairings}\label{subsec:copairings}
Recall that for any
dg or $\ainf$ category $\cc$ over $\K$ (with cohomological composition denoted $[\mu^2]$), one has a dg category $[\cc,\cc]$ of ($\ainf$)
$\cc\!-\!\cc$ bimodules (defined as bilinear $\K$-linear $\ainf$ functors from $\cc^{op} \times
\cc$ to chain complexes over $\K$); see \cite{seidelnatural, ganatra1_arxiv, shklyarov_pairing, bravdyckerhoff, sheridanformulae} 
for some references. We indicate morphisms (derived hom) by
$\hom_{\cc\!-\!\cc}(-,-)$,  the operation of (derived) one-sided tensor product
by $-\otimes_{\cc}-$ (this produces another bimodule) and the operation of (derived)
two-sided tensor product by $-\otimes_{\cc\!-\!\cc}-$ (this produces a
chain-complex), and for our purposes isomorphisms respectively inverses mean homological isomorphisms/inverses. 
In the category $[\cc,\cc]$, there are some canonical objects, the {\em
diagonal bimodule} $\cc_{\Delta}:= \hom_{\cc}(-,-)$ (sometimes just indicated by $\cc$) and the {\em representable}
(or Yoneda) bimodules $Y_{K,L}:= hom_{\cc}(K,-) \otimes hom_{\cc}(-,L)$.
Evaluating at a pair of objects $ev_{K,L}: \mc{B} \mapsto \mc{B}(K,L)$ induces
a dg functor from bimodules to chain complexes, and in particular there is a
chain map $(ev_{K,L})_*: \hom_{\cc\!-\!\cc}(\mc{B}, \mc{B}') \to
\hom_{Mod(\K)}(\mc{B}(K,L), \mc{B}'(K,L))$, sending a morphism $f$ to $(ev_{K,L})_*f  = f_{K,L}$ (which in the bar model can be
thought of as projection to a length zero quotient complex. 

The {\em Hochschild homology} $\r{HH}_*(\cc)$ of $\cc$ is the (cohomology of the) self-bimodule (two sided) tensor product of the diagonal bimodule, whereas the {\em Hochschild cohomology} $\r{HH}^*(\cc)$ is the (cohomology) of the endomorphisms of the diagonal bimodule
(our grading conventions follow \cite{abouzaid_generation, ganatra1_arxiv}).
More generally, one can take the Hochschild homology (= bimodule tensor product with diagonal of) and Hochschild cohomology (=endomorphisms from diagonal to -) of any bimodule $\mc{B}$; the results are denoted $\r{HH}_*(\cc, \bb)$ and $\r{HH}^*(\cc, \bb)$ and generalize the previous case by setting $\bb=\cc_{\Delta}$. There is a natural {\em cap product action} $\r{HH}^*(\cc,\bb) \otimes \r{HH}_*(\cc) \stackrel{\cap}{\to} \r{HH}_*(\cc, \bb)$, coming from the functoriality of bimodule tensor product in the bimodules being considered. For any object $X \in \cc$, there are natural maps $H^*(\bb(X,X)) \to \r{HH}_*(\cc, \bb)$ and $\r{HH}^*(\cc, \bb) \to H^*(\bb(X,X))$ which on the level of bar complexes can be modeled as inclusion of or projection to the length 0 terms; hence we'll call the maps {\em inclusion} respectively {\em projection}. In the case of diagonal coefficients, 
the map $\r{HH}^*(\cc) \to H^*(\hom_{\cc}(X,X)) = H^*(\cc_{\Delta}(X,X))$ is a unital algebra map, and the map
map $H^*(\hom_{\cc}(X,X)) \to \r{HH}_*(\cc)$ can be used to
define the {\em Chern character $ch_X \in \r{HH}_0(\cc)$} of $X$ to be the image of the identity $[id_X]$. (Implicitly all $\ainf$ categories we consider are cohomologically unital, i.e., they have cohomological identity elements).

We say $\cc$ is {\em (homologically) smooth} if its diagonal bimodule $\cc_{\Delta}$ is a {\em perfect bimodule}, meaning it is split-generated by  representable (or Yoneda) bimodules. Denoting $perf(\cc\!-\!\cc)$ the subcategory of perfect bimodules, Morita invariance implies that the bilinear embedding $\cc^{op} \times \cc \to perf(\cc\!-\!\cc)$ induces an isomorphism $\r{HH}_*(\cc^{op}) \otimes \r{HH}_*(\cc) \cong \r{HH}_*(perf(\cc\!-\!\cc))$; furthermore there is an isomorphism $\r{HH}_*(\cc^{op}) \cong \r{HH}_*(\cc)$. When $\cc$ is smooth, the Chern character of the diagonal bimodule $ch_{\cc_{\Delta}}$ therefore gives an element in $\r{HH}_*(perf(\cc\!-\!\cc)) \cong \r{HH}_*(\cc) \otimes \r{HH}_*(\cc)$, or equivalently a map from $\K$ to $\r{HH}_*(\cc) \otimes \r{HH}_*(\cc)$ which we call the {\em algebraic closed-string copairing} $c_{alg}$ \cite{shklyarov_pairing}.

\begin{rem}
    Note that while the algebraic closed-string copairing only requires smoothness to define, we will require a (weak smooth) Calabi-Yau structure to formulate the relevant ``algebraic openness'' degeneracy condition; see Definition \ref{def:algebraicallyopen}. 
\end{rem}

Recall that
for any bimodule $\pp$, there is a {\em bimodule dual} $\pp^!$, whose value on a pair of objects $(K,L)$ is $\pp^!(K,L):= \hom_{\cc\!-\!\cc}(\pp, Y_{K,L})$. We call $\cc_{\Delta}^!$ the {\em inverse dualizing bimodule} (note $H^*(\cc_{\Delta}^!(K,L)) = \r{HH}^*(\cc, Y_{K,L})$), and note there is a canonical map $ev: \r{HH}_*(\cc) \to H^*(\hom_{\cc\!-\!\cc}(\cc^!, \cc))$ sending an element $\alpha$ to a morphism of bimodules which (on the level of maps of chain complexes) corresponds to capping with $\alpha$. $ev$ is an equivalence for smooth $\cc$ (analogous to the fact that the canonical map for vector spaces $V \otimes V \to \hom(V^*, V)$ is an equivalence when $V$ is finite). 
A {\em (weak) smooth Calabi-Yau structure} of dimension $n$ on a smooth category $\cc$ is an element $\sigma \in \r{HH}_{-n}(\cc)$ such that $ev(\sigma)$ is an isomorphism of bimodules $\cc^![n] \to \cc$; on the cohomology level for a pair $(K,L)$ the map $ev(\sigma)$ is the operation $-\cap \sigma: \r{HH}^*(\cc, Y_{K,L}) \stackrel{\cong}{\to} \r{HH}_{*-n}(\cc, Y_{K,L}) \cong \cc_{\Delta}(K,L)$. It follows that the cap product map $-\cap \sigma: \r{HH}^*(\cc, \bb) \stackrel{\cong}{\to} \r{HH}_{*-n}(\cc, \bb)$ is an isomorphism for any bimodule, as it can be realized by taking the bimodule tensor product of $ev(\sigma)$ with $\bb$, along with the isomorphism $H^*(\cc_{\Delta}^! \otimes_{\cc\!-\!\cc} \bb) \cong \r{HH}^*(\cc, \bb)$ that exists when $\cc$ is smooth. A {\em weak smooth Calabi-Yau category} is a pair $(\cc, \sigma)$ of a (dg/$\ainf$) category equipped with a weak smooth Calabi-Yau structure.
For a weak smooth Calabi-Yau category $(\cc, \sigma)$,  we will denote by $CY_{\cc}:  \r{HH}_{*-n}(\cc) \stackrel{\cong}{\to} \r{HH}^*(\cc)$ the inverse equivalence to capping with $\sigma$.

We now turn to defining the open-string copairing associated to a weak Calabi-Yau structure. First we define a copairing $c^f_{K,L} \in  H^*(\hom_{\cc}(K,L)) \otimes_{\K} H^*(\hom_{\cc}(L,K))[n]$ associated to any closed morphism of bimodules $f \in \hom^n_{\cc\!-\!\cc}(\cc_{\Delta}, \cc^![n])$, called the {\em copairing shadow} of $f$. 
The morphism $f$ induces, for every pair of objects $(L,L)$ a chain map $f_{L,L}: \hom_{\cc}(L,L) \to \cc^![n](L,L)$; taking the image of any cocycle $id_L$ cohomologically representing the identity (any `cohomological unit') gives a cocycle $f_{L,L}(id_L) \in  \cc^!(L,L) = \hom_{\cc\!-\!\cc}(\cc, \hom_{\cc}(-, L) \otimes_{\K} \hom_{\cc}(L,-))$; now such a bimodule morphism induces, for any object $K$, a map of chain complexes $(f_{L,L}(id_L))_{K,K}: \hom_{\cc}(K,K) \to \hom_{\cc}(K,L) \otimes_{\K} \hom_{\cc}(L,K)$; taking the image of $id_K$ induces a well defined cohomology class $[(f_{L,L}(id_L))_{K,K}(id_K)] \in H^*(\hom_{\cc}(K,L)) \otimes_{\K} H^*(\hom_{\cc}(L,K))$ which we define to be the copairing shadow $c^{f}_{K,L}$ (this only depends on the cohomology class $[f]$). 
Let's observe first that by Morita invariance the categories of bimodules associated to $\cc$ and its pre-triangulated split-closure $perf(\cc)$ are naturally quasi-equivalent in a way identifying (up to quasi-isomorphism) the diagonal bimodules and inverse dualizing bimodules. Hence, such an $f$ induces a canonical copairing shadow $c_{K,L}^f$ for any $K,L \in perf(\cc)$, coinciding with the previously defined copairing shadow for objects in $\cc$.

Finally, given a weak smooth Calabi-Yau structure $\sigma$ on $\cc$ so $ev(\sigma)$ gives an isomorphism of bimodules $\cc^![n] \stackrel{\sim}{\to} \cc$, we define the {\em algebraic open-string copairing} associated to $(\cc, \sigma)$ to be the copairing shadow of the inverse to $ev(\sigma)$: $c_{K,L}^\sigma := c_{K,L}^{ev(\sigma)^{-1}}$. As before we will note that these are defined for any $K,L \in perf(\cc)$ and we will abbreviate $c_L^{\sigma}:= c_{L,L}^\sigma$.

\subsection{Properness implies non-degeneracy}\label{subsec:nondegcopairing}
We will recall here that one has strong control over the closed respectively open string algebraic copairings on a weak smooth Calabi-Yau category when the given category respectively object is proper. The closed string case is directly in the literature and doesn't require the Calabi-Yau structure:
\begin{prop}[Shklyarov \cite{shklyarov_pairing}, where the result is attributed to Kontsevich-Soibelman]\label{shklyarovprop}
    When a smooth category $\cc$ is further proper, then its algebraic closed-string copairing is perfect, meaning $c^*_{alg}$ is an isomorphism.
\end{prop}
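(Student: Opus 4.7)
The plan is to construct a ``Mukai--Shklyarov'' pairing
\[
p_{alg}: \r{HH}_*(\cc) \otimes \r{HH}_*(\cc) \to \K
\]
that exists whenever $\cc$ is proper, and to show it is dual to $c_{alg}$ via the standard snake/zigzag identity. Together these will imply $c^*_{alg}$ admits an inverse built from $p_{alg}$, hence is an isomorphism.

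Properness of $\cc$ says exactly that the bilinear functor $\cc^{op} \times \cc \to Mod(\K)$ given by evaluating the diagonal bimodule lands in $perf(\K)$. Extending to split-closures, the diagonal bimodule therefore determines a dg functor $perf(\cc\!-\!\cc) \to perf(\K)$, which on Hochschild homology induces a trace map $\r{HH}_*(perf(\cc\!-\!\cc)) \to \r{HH}_*(perf(\K)) \cong \K$. Composing with the Morita isomorphism $\r{HH}_*(\cc^{op}) \otimes \r{HH}_*(\cc) \cong \r{HH}_*(perf(\cc\!-\!\cc))$ and the canonical identification $\r{HH}_*(\cc^{op}) \cong \r{HH}_*(\cc)$ recalled in \S \ref{subsec:copairings} yields the pairing $p_{alg}$.

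The key compatibility is that both $c_{alg}$ and $p_{alg}$ arise from the single datum of $\cc_{\Delta}$: the copairing is its Chern character (well-defined as a bimodule Chern character using smoothness to promote $\cc_{\Delta}$ to a perfect bimodule), while the pairing is the trace of $id_{\cc_{\Delta}}$ under the evaluation functor to $perf(\K)$ (well-defined using properness). The general zigzag identity between Chern characters and traces of perfect proper bimodules then reads
\[
(p_{alg} \otimes id) \circ (id \otimes c_{alg}) = id_{\r{HH}_*(\cc)},
\]
together with its symmetric counterpart. This snake identity says that contracting $c_{alg}$ with $p_{alg}$ against one slot reproduces the identity on $\r{HH}_*(\cc)$, so $c^*_{alg}$ is an isomorphism with inverse (up to Koszul signs) induced by $p_{alg}$.

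The technical heart is verifying this zigzag identity. A clean conceptual route is the Morita symmetric monoidal $2$-category of $\ainf$ categories: smoothness of $\cc$ amounts to right dualizability with predual $\cc^{op}$ (unit morphism given by $\cc_{\Delta}$, viewed as a map $\K \to \cc^{op} \otimes \cc$), properness amounts to left dualizability (counit given by $\cc_{\Delta}$, viewed as a map $\cc \otimes \cc^{op} \to \K$), and simultaneous smoothness and properness yields a self-duality of $perf(\cc)$. Under this abstraction $c_{alg}$ and $p_{alg}$ are the images of the unit and counit of the duality under $\r{HH}_*$, and the snake relation becomes the standard triangle identity for the resulting adjunction. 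Alternatively one can carry out the verification explicitly on the bar complex, as in Shklyarov's original treatment \cite{shklyarov_pairing}.
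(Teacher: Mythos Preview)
Your proposal is correct and follows essentially the same route as the paper's own sketch: construct the Shklyarov pairing from the hom functor $\cc^{op} \times \cc \to perf(\K)$ via Morita invariance and K\"unneth, invoke the snake relation between this pairing and the algebraic copairing (both arising from $\cc_{\Delta}$), and conclude that $c^*_{alg}$ is invertible. Your additional 2-categorical gloss (smoothness and properness as right and left dualizability, snake relation as triangle identity) is a nice conceptual elaboration but not a genuinely different argument.
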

\begin{proof}[Sketch]
    When $\cc$ is proper, {\em loc. cit.} proves $\r{HH}_*(\cc)$ carries a canonical pairing (which can be thought of as essentially the pushforward map induced by $\hom: \cc^{op} \otimes \cc \to perf(\K)$ after using Morita invariance to deduce $\r{HH}_*(perf(\K)) = \r{HH}_*(\K) = \K$ and appealing to the K\"{u}nneth formula for $\r{HH}_*$ and op-invariance $\r{HH}_*(\cc^{op}) \cong \r{HH}_*(\cc)$). {\em loc. cit.} further proves that when $\cc$ is smooth, the closed-string algebraic copairing and pairing are related by the usual {\em snake relation}, implying in particular that the map $V \to V^* \to V$ induced first by the pairing and then the copairing is the identity map. This implies $c^*_{alg}: V^* \to V$ is an isomorphism as desired.
\end{proof}

Turning to the case of open-string copairings, we shall now explain that similarly, (right or left) properness of an object $L$ implies non-degeneracy of its algebraic copairing. The main idea is to appeal to a more general result of Brav-Dyckerhoff \cite{bravdyckerhoff}*{Thm 3.1} that smooth Calabi-Yau structures (``left Calabi-Yau structures'' in {\em loc. cit.}) induce proper Calabi-Yau structures (``right Calabi-Yau structures'' in {\em loc. cit.}) on any subcategory of (right respectively left) proper objects, and in particular a perfect pairing on endormorphisms of any such object.
We will recall this below and further show this induced pairing is compatible (fits into a snake relation with) with the algebraic open-string copairing defined above:

\begin{prop}\label{properobjectcopairingalgebra}
    Let $\cc$ be a (dg/$\ainf$) category equipped with a weak smooth Calabi-Yau structure $\sigma \in \r{HH}_{-n}(\cc)$, and let $c^{\sigma}$ be the $\sigma$-induced open-string copairing. If $\pp \subset \cc$ denotes any collection of right-proper objects, then for any $K,L$ in $\pp$, $c^{\sigma}_{K,L}$ is perfect. The same conclusion holds if $\pp \subset \cc$ is any collection of left-proper objects.
\end{prop}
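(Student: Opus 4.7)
The strategy is to invoke the Brav--Dyckerhoff transfer theorem, which from a smooth Calabi--Yau structure on $\cc$ produces a proper Calabi--Yau pairing on the hom complexes of any collection of right-proper objects, and then to verify that this induced pairing is related to our copairing $c^{\sigma}$ by the snake relation; non-degeneracy of $c^{\sigma}_{K,L}$ will then follow from perfectness of the induced pairing. The left-proper statement reduces to the right-proper statement by passing to $\cc^{op}$ (on which $\sigma$ is still a weak smooth Calabi--Yau structure and on which left- and right-properness are interchanged), so it suffices to treat right-properness throughout.

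Concretely, let $\pp \subset \cc$ denote the full subcategory on a collection of right-proper objects. By \cite{bravdyckerhoff}*{Thm 3.1}, $\sigma$ induces a weak proper (``right'') Calabi--Yau structure $\tau$ on $\pp$, equivalently a cohomology-level isomorphism of $\pp$-bimodules $\pp_\Delta \stackrel{\cong}{\to} \pp^{\vee}[-n]$, where $\pp^{\vee}(K,L) := \hom_{\K}(\pp(L,K), \K)$ (well-defined precisely because $\pp$ is right-proper). On the hom level this amounts, for each $K, L \in \pp$, to a perfect pairing
$$p_{K,L} : H^*(\hom_{\cc}(K,L)) \otimes H^*(\hom_{\cc}(L,K)) \longrightarrow \K[-n].$$
With $p$ in hand, I would verify the snake identity
$$(\id \otimes p_{K,L}) \circ (c^{\sigma}_{K,L} \otimes \id) = \id_{H^*(\hom_{\cc}(L,K))}$$
(together with its mirror, modulo appropriate sign/shift conventions). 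Granting the snake, the map $(c^{\sigma}_{K,L})^*$ has as a two-sided inverse the adjoint of $p_{L,K}$; since $p_{L,K}$ is perfect, that adjoint is an isomorphism and hence so is $(c^{\sigma}_{K,L})^*$, which is exactly what perfectness of $c^{\sigma}_{K,L}$ demands.

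The main obstacle is establishing the snake compatibility between the abstractly transferred pairing $p$ and the concretely defined copairing shadow $c^{\sigma}$. Conceptually both structures encode the same Calabi--Yau class $\sigma$: $c^{\sigma}$ is built from the smoothness-duality isomorphism $ev(\sigma)^{-1} : \cc \stackrel{\cong}{\to} \cc^{!}[n]$ by insertion of identity morphisms twice, while $p$ arises from pushing $\sigma$ forward along the hom-functor $\pp^{op} \otimes \pp \to perf(\K)$ (the pushforward landing in $perf(\K)$ exactly because $\pp$ is right-proper). To make this compatibility precise, the cleanest route is to show that Brav--Dyckerhoff's bimodule isomorphism $\pp_\Delta \cong \pp^{\vee}[-n]$ factors, on proper objects, as the composition of $ev(\sigma)^{-1}$ with the canonical bimodule evaluation $\pp^{!} \to \pp^{\vee}$ that becomes available once one restricts to proper objects; the snake then becomes a formal consequence of the coevaluation/evaluation triangle identities for this pairing. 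Alternatively, one can give a direct length-zero bar-model computation extracting $p$ and $c^{\sigma}$ at the chain level in terms of $\sigma$ and checking the identity termwise, in the spirit of the analogous closed-string snake recalled in the proof of Proposition \ref{shklyarovprop}.
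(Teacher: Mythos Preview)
Your proposal is correct and follows essentially the same approach as the paper: invoke Brav--Dyckerhoff to obtain a perfect pairing on $\pp$, then verify the snake relation with $c^{\sigma}$, and handle the left-proper case by passing to $\cc^{op}$. The paper carries out the snake verification you identify as ``the main obstacle'' by explicitly decomposing Brav--Dyckerhoff's functor $\Phi$ into a chain of elementary steps (restriction, left-module dualization, properness identifications, Yoneda) and tracking the image of $ev(\sigma)^{-1}$ through each at the chain level---this is your route (a), though the ``canonical evaluation $\pp^{!}\to\pp^{\vee}$'' you posit is in practice this multi-step composite rather than a single obvious map.
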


\begin{proof}[Proof of Proposition \ref{properobjectcopairingalgebra}]

\cite{bravdyckerhoff}*{Thm 3.1}
shows in particular that any collection of right proper objects $\pp$ (in this case $\pp = \{L\}$) inherits (from the weak smooth Calabi-Yau structure $\sigma$ on $\cc$) a weak proper Calabi-Yau structure (the main result cited concerns non-weak Calabi-Yau structures, but we only need the weaker version, where the crux of the proof lies). Recall that a {\em weak proper Calabi-Yau structure} on a category $\pp$ is a map $tr_{\pp}: \r{HH}_*(\pp) \to \K[-n]$ such that under the natural identification $\r{HH}_{-n}(\pp)^{\vee} \to \hom_{\pp\!-\!\pp}(\pp_{\Delta}[n], \pp^\vee)$ (where $\pp^\vee:= \hom_{\pp}(-,-)^\vee$ denotes the {\em (linear) dual} of the diagonal bimodule), $tr_{\pp}$ corresponds to a bimodule quasi-isomorphism. Equivalently, the map $p^{tr_{\pp}}_{K,L}: H^*(\hom_{\pp}(K,L)) \otimes H^*(\hom_{\pp}(L,K)) \to \K[-n]$ induced by composing
$H^*(\hom_{\pp}(X,Y)) \times H^*(\hom_{\pp}(Y,X)) \stackrel{[\mu^2]}{\to} H^*(\hom(Y,Y)) \to \r{HH}_{*}(\pp) \stackrel{tr_{\pp}}{\to} \K[-n]$
    perfect pairing; in particular a weak proper Calabi-Yau structure has an associated perfect pairing $p^{tr_{\pp}}_{K,L}$. To define the weak proper Calabi-Yau structure on a collection of right proper objects $\pp$ in a weak smooth Calabi-Yau category $(\cc, \sigma)$,
    consider the Shklyarov pairing $\r{HH}_*(\cc) \otimes \r{HH}_*(\pp) \to \K$ associated to the hom pairing  $\cc^{op} \times \pp \to perf(\K)$ (this uses the fact that $\pp$ consists of right proper objects), then define $tr_{\pp}:= \langle \sigma, - \rangle$. As it induces a weak proper Calabi-Yau structure, $\sigma$ therefore induces a pairing on objects of $\pp$, defined as
    $p^{\sigma}_{K,L}:= p^{\langle \sigma, - \rangle}_{K,L}$.
    The argument in {\em loc. cit.} was written in the setting of dg categories but the same argument with the same proof carries through in the setting of $\ainf$ categories; or we can simply reduce to the dg case by replacing each $\ainf$ category with a quasi-equivalent dg category (noting that the notions of weak smooth Calabi-Yau structure, associated copairing, and proper object all transfer over). 

    It remains to check the snake relation between the algebraic open-string copairing $c^{\sigma}_{K,L}$ and $p^{\sigma}_{K,L}$.  Equivalently we will show $(c^{\sigma}_{K,L})^*: H^*(\hom_{\pp}(L,K))^{\vee}[-n] \to H^*(\hom_{\pp}(K,L)) $ and $(p^{\sigma}_{K,L})^*:  H^*(\hom_{\pp}(K,L)) \to H^*(\hom_{\pp}(L,K))^{\vee}[-n]$ are inverse for any $K,L \in \pp$. To do so, we will
first recall that in the proof from {\em loc. cit.} the following commutative diagram appears:
\begin{equation}
    \xymatrix{
        \r{HH}_{-n}(\cc) \ar[d]^{\cong} \ar[r] & \r{HH}_{n}(\pp)^{\vee} \ar[d]^{\cong}\\
        \hom_{\cc\!-\!\cc}(\cc^![n], \cc_{\Delta}) \ar[r]^{\Phi} & \hom_{\pp\!-\!\pp}(\pp_{\Delta}[n], \pp^\vee)
}
\end{equation}
where the top horizontal arrow is the map $\alpha \mapsto \langle \alpha, - \rangle$ using the Shklyarov-type pairing between $\cc$ and $\pp$ defined above. The vertical arrows are natural identifications (the left arrow for instance sends $\alpha$ to $ev(\alpha)$) and the bottom horizontal arrow, which we will expand upon, sends bimodule quasi-isomorphisms to bimodule quasi-isomorphisms (as it is induced by a composition of various functors between bimodule categories). It follows that the image of $ev({\sigma})$ induces a bimodule quasi-isomorphism $p_{\sigma}^*: \pp \stackrel{\sim}{\to} \pp^\vee[-n]$ (which implies by the commutativity that $tr_{\pp}:= \langle \sigma, - \rangle$ is a weak proper Calabi-Yau structure). In particular, by evaluating at the pair of objects $K,L \in \pp$, we obtain a quasi-isomorphism of chain complexes $(p_{\sigma})_{K,L}^*: \hom(K,L) \to \hom(L,K)^\vee[-n]$, which is (passing to the adjoint $\hom(K,L) \otimes \hom(L,K) \to \K[-n]$) the desired pairing by definition.

If we now apply the same functorial map $\Phi$ to the (homologically) inverse quasi-isomorphism of bimodules $CY = ev({\sigma})^{-1} \in \hom_{\cc\!-\!\cc}(\cc_{\Delta}, \cc^![n])$, and then evaluate at the pair of objects $K,L$, we will obtain a (homologically) inverse quasi-isomorphism of chain complexes $((p_{\sigma})_{K,L}^*)^{-1}: \hom(L,K)^\vee[-n] \to \hom(K,L)$. It suffices to show  that this inverse $((p_{\sigma})_{K,L}^*)^{-1}$ (which is induced by $CY = ev({\sigma})^{-1}$) can be computed as  $\left((ev({\sigma})^{-1})_{K,K}([id_K])_{L,L}\right)([id_L])^* = (c_{K,L}^{\sigma})^* = (- \otimes id)(c_{K,L}^{\sigma})$. This is an immediate consequence of verifying more generally that the following diagram is cohomologically commutative:
\begin{equation}\label{maindiagram}
\xymatrix{
    \hom_{\cc\!-\!\cc}(\cc, \cc^![n]) \ar[d]^{f \mapsto (f_{L,L}(id_L))_{K,K}(id_L)} \ar[r]^{\Phi} & \hom_{\pp\!-\!\pp}(\pp^\vee, \pp[n]) \ar[d]^{(ev_{K,L})_*}\\
    \hom_{\cc}(K,L) \otimes \hom_{\cc}(L,K) \ar[r]^{*\ \ \ \ \ \ \ } & \hom_{\K}(\hom_{\pp}(K,L)^\vee, \hom_{\pp}(L,K)[n])
}
\end{equation}
where the left vertical arrow takes a map $f: \cc \to \cc^![n]$ and looks at its copairing shadow, the bottom horizontal arrow takes $c \mapsto (c^*: \phi \mapsto (\phi \otimes id) \circ c)$, and the right vertical arrow simply considers the underlying map of chain complexes associated to an element $\hom_{\pp\!-\!\pp}(\pp^\vee, \pp[n])$ for a pair $(K,L)$ both in $\pp$.

To verify this, let us describe the map $\Phi$ from {\em loc. cit.} in some more detail.
First, one can restrict a $\cc\!-\!\cc$ bimodule along the inclusion $i: \pp \to \cc$ on the right; we'll call the right restriction of a bimodule $\mc{B}$ $\mc{B}|_{\pp}$ following \cite{bravdyckerhoff}. Restriction is functorial, inducing a map
\begin{equation}\label{restrictiontop}
    res_{\pp}: \hom_{\cc\!-\!\cc}(\cc, \cc^![n]) \to \hom_{\cc\!-\!\pp}(\cc|_{\pp}, \cc^![n]|_{\pp});
\end{equation}
the image of $CY = ev(\sigma)^{-1}$ is simply $CY|_\pp$, and evidently for $K,L$ in $\pp$, this restriction commutes with the natural maps that take $CY$ respectively $CY|_{\mc{P}}$ to their copairing shadow for a pair of objects in $\pp$. 

We now apply the functor $\hom_{\cc}(-, \cc)$ (where $\hom_{\cc}$ is taken in the category of left modules), a contravariant functor from $\cc\!-\!\pp$ bimodules to $\pp\!-\!\cc$ bimodules (note the right module structure on the target $\cc$ survives as does the right $\pp$ module structure on the source which becomes a left module structure by dualization):
\begin{equation}\label{dualize}
    \Phi_1: \hom_{\cc\!-\!\pp}(\cc|_{\pp}, \cc^![n]|_{\pp}) \to \hom_{\pp\!-\!\cc}(\hom_{\cc}(\cc^!|_{\pp}[n], \cc), \hom_{\cc}(\cc|_{\pp}, \cc));
\end{equation}
On the level of chain complexes, for an object $K \in \pp$
\begin{equation}
    \begin{split}
(\Phi_1)_{K,K}:= \hom_{\K}( \cc(K,K), \cc^![n](K,K)) &\to \hom_{\K}(\hom_{\K}(\cc^!(K,K), \cc(K,K)), \hom_{\K}(\cc|_{\pp}(K,K), \cc(K,K)))\\
f &\mapsto (-) \circ f
\end{split}
\end{equation}
Next we observe that by properness of the bimodule $\cc|_{\pp}$ (as $\pp$ consists of right-proper objects), there is a natural isomorphism of bimodules $ev: \cc|_{\pp} \stackrel{\cong}{\to} (\cc|_{\pp}^\vee)^\vee$. Composing with $ev$, we see that $\cc^![n]|_{\pp} = \hom_{\cc\!-\!\cc}(\cc, \cc|_{\pp} \otimes \cc)[n] \stackrel{ev \otimes id \circ -}{\longrightarrow} \hom_{\cc\!-\!\cc}(\cc, (\cc|_{\pp}^\vee)^\vee \otimes \cc)[n]$. Using the canonical map $\alpha: M^\vee \otimes_{\K} N \to \hom_{\K}(M,N)$ which exists for any $M$, $N$ (modules, chain complexes etc), which is an isomorphism if $M$ is proper (applicable here because $\cc|_{\pp}$ is proper), we can map this latter complex isomorphically to
    $\stackrel{\cong}{\to} \hom_{\cc\!-\!\cc}(\cc, \hom_{\K}((\cc_{\pp}^\vee), \cc))[n]$, which by hom-tensor adjunction is equal to (in fact coincides on the level of chain complexes built from the bar construction)  $\hom_{\cc}(\cc \otimes_{\cc} \cc_{\pp}^\vee, \cc)[n]$.
    Then there is a natural ``collapse'' quasi-isomorphism of bimodules 
    \begin{equation}\label{collapse}
        \cc \otimes_{\cc} \cc|_{\pp}^\vee \stackrel{\cong}{\to} \cc|_{\pp}^\vee;
    \end{equation} 
    On the level of chain complexes, the homologically inverse chain map $\cc_{\pp}^\vee(K,L) \to (\cc \otimes_{\cc} \cc_{\pp}^\vee)(K,L)$ sends $x \mapsto id_K \otimes x \in \cc(K,K) \otimes_K \cc|_{\pp}^\vee(K,L) \subset (\cc \otimes_{\cc} \cc|_{\pp}^\vee)(K,L)$ (using the bar model). Composing with \eqref{collapse} induces a quasi-equivalence $\hom_{\cc}(\cc|_{\pp}^\vee, \cc) \stackrel{\cong}{\to} \hom_{\cc}(\cc \otimes_{\cc} \cc|_{\pp}^\vee, \cc)$; inverting this we all together get a quasi-isomorphism $\eta: \cc^![n]|_{\pp} \cong \hom_{\cc}(\cc|_{\pp}^\vee, \cc)$; which on the level of chain complexes (by above) admits the following description 
    \begin{equation}
        \begin{split}
        \eta_{K,K}: \hom_{\cc\!-\!\cc}(\cc, \cc|_{\pp}(-,K) \otimes \cc(K,-))[n] &\to \hom_{\cc}(\cc|_{\pp}^\vee(-,K), \cc(-, K))\\
        g \mapsto (\phi \mapsto \left(\phi \otimes \id\right) (g(id_K)))
    \end{split}
    \end{equation}
The quasi-isomorphism $\eta$ induces
\begin{equation}
    \begin{split}
    \Phi_2: \hom_{\pp\!-\!\cc}(\hom_{\cc}(\cc^!|_{\pp}[n], \cc), \hom_{\cc}(\cc|_{\pp}, \cc)) &\stackrel{\cong}{\to} \hom_{\pp\!-\!\cc}( \hom_{\cc}(\hom_{\cc}(\cc|_{\pp}^\vee, \cc)[n], \cc), \hom_{\cc}(\cc|_{\pp}, \cc))\\
\end{split}
\end{equation}
which on the level of chain complexes sends
$h \mapsto (\psi \mapsto h(\psi \circ \eta))$.
Finally there is a canonical morphism from $\cc|_{\pp}^\vee$ to its double left-$\cc$-module dual $\hom_{\cc}(\hom_{\cc}(\cc|_{\pp}^\vee, \cc), \cc)$ which is an isomorphism along any object $p \in \pp$ for which $\cc|_{\pp}^\vee(-, p)$ is a perfect $\cc$-module; this holds in our case for all $p \in \pp$ seeing as $\cc|_{\pp}^\vee(-,p)$ is a proper $\cc$-module and $\cc$ is smooth (over smooth categories, proper modules are automatically perfect; compare \cite{GPSwrappedconstructible}*{Lemma A.8}):
\begin{equation}\label{doubledualdual}
    \cc|_{\pp}^\vee \stackrel{\cong}{\to} \hom_{\cc}(\hom_{\cc}(\cc|_{\pp}^\vee, \cc), \cc).
\end{equation}
On the chain level the above map sends $\phi \mapsto (z \mapsto z(\phi))$.

Precomposing with the inverse of \eqref{doubledualdual} (shifted by $-n$)  we obtain an equivalence
\begin{equation}
    \Phi_3: \hom_{\pp\!-\!\cc}( \hom_{\cc}(\hom_{\cc}(\cc|_{\pp}^\vee, \cc)[n], \cc), \hom_{\cc}(\cc|_{\pp}, \cc))  \to \hom_{\pp\!-\!\cc}( \cc|_{\pp}^\vee[-n], \hom_{\cc}(\cc|_{\pp}, \cc))
\end{equation}
Compose once more with restriction on the right to $\pp$:
\begin{equation}
    res_{\pp}: \hom_{\pp\!-\!\cc}( \cc|_{\pp}^\vee[-n], \hom_{\cc}(\cc|_{\pp}, \cc)) \to \hom_{\pp\!-\!\pp}(\pp^\vee, \hom_{\cc}(\cc|_{\pp}, \cc|_{\pp})).
\end{equation}
Finally as $\pp \subset \cc$ there is a Yoneda isomorphism $ \pp \stackrel{\cong}{\to} \hom_{\cc}(\cc|_{\pp}, \cc|_{\pp})$.
On the level of chain complexes given $K,L \in \pp$ a homological inverse $\hom_{\cc}(\cc|_{\pp}(-,L), \cc|_{\pp}(-,K))$ can be described by sending $f \mapsto f(id_L)$.  
Composing with the inverse to the Yoneda map on the right we get
\begin{equation}
    \Phi_4: \hom_{\pp\!-\!\pp}(\pp^\vee, \hom_{\cc}(\cc|_{\pp}, \cc|_{\pp})) \to \hom_{\pp\!-\!\pp}(\pp^\vee, \pp[n]).
\end{equation}
The map $\Phi$ can now be described as $\Phi_4 \circ res_{\pp} \circ \Phi_3 \circ \Phi_2 \circ \Phi_1 \circ res_{\pp}$. Assembling all the chain level descriptions together, given an element $f_{K,K} \in \hom_{\K}(\cc(K,K), \cc^!(K,K))$ and for objects $K,L \in \pp \subset \cc$ and an element $\phi \in \pp^\vee(K,L)$, $\Phi(f)(\phi) \in \pp(L,K)$ can be described up to chain equivalence as follows (using the above chain level descriptions of homological inverses to certain maps, and ignoring the $res_{\pp}$ terms as we've already restricted to objects of $\pp$):
\begin{equation}
    \begin{split}
        f &\stackrel{\Phi_1}{\mapsto} h := (-) \circ f \stackrel{\Phi_2}{\mapsto} (\psi \mapsto h(\psi \circ \eta) = \psi \circ \eta \circ f ) \\
        & \stackrel{\Phi_3}{\mapsto}  \phi \mapsto (ev_{\phi} \circ \eta_{K,K} \circ f_{L,L}) \\
        &\stackrel{\Phi_4}{\mapsto}   \phi \mapsto (ev_{\phi} \circ \eta_{K,K} \circ f_{L,L}(id_L))\\
&= \phi \mapsto ev_{\phi} \circ (x \mapsto x \otimes id \circ (f_{L,L}(id_L))_{K,K}(id_K))\\
&= \phi \mapsto (\phi \otimes id) ((f_{L,L}(id_L))_{K,K}(id_K)).\\
&= ((f_{L,L}(id_L))_{K,K}(id_K))^*
    \end{split}
\end{equation}
This verifies \eqref{maindiagram} so we are done in the right proper case.

The proof for the left proper case is essentially identical to the right proper case (one simply swaps the order of the pairing between $\cc$ and $\pp$ and similarly all bimodule structures in what follow, or alternatively think about right properness of $\pp^{op}$ in $\cc^{op}$, which also carries a weak smooth Calabi-Yau structure by $\r{HH}_*(\cc) \cong \r{HH}_*(\cc^{op})$).
\end{proof}

\subsection{A criterion for non-properness}\label{subsec:openness}

Given a graded unital algebra $A$, we will say a copairing $c \in A \otimes A[k]$ is {\em entirely degenerate} if the induced map $c^*: A^{\vee}[-k] \to A$ has nilpotent image. It is easy to see that an entirely degenerate $c$ can only be non-degenerate if $1$ is nilpotent i.e., if $A=0$.

On a weak smooth Calabi-Yau category $\cc$, in order to talk about the degeneracy properties of its algebraic closed-string copairing, 
we transfer the copairing to Hochschild cohomology, a unital algebra, via the Calabi-Yau isomorphism $\r{HH}_{*-n}(\cc) \cong \r{HH}^*(\cc)$ (or one can equivalently transfer the ring structure to $\r{HH}_*(\cc)$).
The following terminology is motivated by the usage of ``open manifold'' to describe manifolds all of whose components are non-compact:
\begin{defn}\label{def:algebraicallyopen}
    A (weak) smooth Calabi-Yau category $\cc:=(\cc, \sigma)$ is {\em algebraically open} if its algebraic closed-string copairing, thought of as living on $HH^*(\cc)$ via the Calabi-Yau isomorphism, is entirely degenerate. 

    Similarly, we say an object $X$ in a (weak) smooth Calabi-Yau category $\cc$ is {\em algebraically open} if its algebraic open-string copairing on $H^*(\hom_{\cc}(X,X))$ is entirely degenerate.
\end{defn}

An immediate Corollary of the previous section is that algebraic openness provides a non-properness or vanishing criterion:
\begin{cor}\label{opennonproperzero}
    Let $\cc:=(\cc, \sigma)$ be an algebraically open weak smooth Calabi-Yau category. Then $\cc$ is either non-proper or zero. Similarly, if $X \in \cc$ is an algebraically open object in a weak smooth Calabi-Yau category, $X$ is either (both left and right) non-proper or zero.
\end{cor}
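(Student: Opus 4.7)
The statement is advertised as an immediate corollary, and indeed the plan is to combine the two nondegeneracy results of the preceding subsection (Propositions \ref{shklyarovprop} and \ref{properobjectcopairingalgebra}) with the algebraic openness hypothesis and the elementary observation that a unital algebra whose unit is nilpotent must vanish. Both cases proceed by contrapositive.

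For the category statement, the plan is as follows. Suppose $\cc$ is proper in addition to being algebraically open. Since weak smooth Calabi-Yau implies smooth, Proposition \ref{shklyarovprop} says $c^*_{alg}$ is an isomorphism. Transporting the copairing to $\r{HH}^*(\cc)$ along the Calabi-Yau isomorphism $CY_{\cc}: \r{HH}_{*-n}(\cc) \stackrel{\cong}{\to} \r{HH}^*(\cc)$ preserves nondegeneracy, while algebraic openness asserts that the image of this transported map consists of nilpotent elements in $\r{HH}^*(\cc)$. Surjectivity of $c^*$ then forces every element of $\r{HH}^*(\cc)$ to be nilpotent; in particular $1 \in \r{HH}^0(\cc)$ satisfies $1^k = 0$ for some $k$, so $1 = 0$ and $\r{HH}^*(\cc) = 0$. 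For any $X \in \cc$ the canonical projection $\r{HH}^*(\cc) \to H^*(\hom_{\cc}(X,X))$ is a unital algebra map, so $[id_X] = 0$, which in a cohomologically unital category forces $X = 0$; since this holds for all $X$, $\cc$ itself is zero.

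The object case is structurally parallel. Given $X \in \cc$ algebraically open and, without loss of generality, right proper, apply Proposition \ref{properobjectcopairingalgebra} to the singleton $\pp = \{X\}$ to conclude that $(c^{\sigma}_X)^*: H^*(\hom_\cc(X,X))^\vee[-n] \to H^*(\hom_\cc(X,X))$ is an isomorphism. Algebraic openness of $X$ says this same map has nilpotent image, so by the same ``nilpotent unit'' argument as above $[id_X] = 0$ in $H^*(\hom_\cc(X,X))$, whence $X = 0$. The left-proper case is identical, invoking the left-proper half of Proposition \ref{properobjectcopairingalgebra}.

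There is essentially no obstacle, as the substantive content lies in Propositions \ref{shklyarovprop} and \ref{properobjectcopairingalgebra}. The only point requiring attention is the direct comparison between the nondegeneracy conclusion of those propositions (``$c^*$ is an isomorphism'') and the ``entirely degenerate'' formulation used to define algebraic openness (``image of $c^*$ is nilpotent''); these are mutually exclusive unless the ambient unital algebra vanishes, which is precisely the content of the corollary.
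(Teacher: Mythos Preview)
Your proposal is correct and follows essentially the same approach as the paper's proof: assume properness, invoke Proposition~\ref{shklyarovprop} (respectively Proposition~\ref{properobjectcopairingalgebra}) to obtain nondegeneracy of the relevant copairing, then use the algebraic openness hypothesis (entire degeneracy) to force the unital algebra $\r{HH}^*(\cc)$ (respectively $H^*(\hom_\cc(X,X))$) to vanish via the ``nilpotent unit'' observation, and finally deduce vanishing of the category (respectively the object). The paper is more terse but the logic is identical; your version simply spells out the transport along $CY_\cc$ and the unital-module/algebra-map step in more detail.
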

\begin{proof}
    Suppose such a $\cc$ is proper. Then Proposition \ref{shklyarovprop} implies that its algebraic closed-string copairing $c_{alg}$ is non-degenerate, but it is also entirely degenerate, hence $\r{HH}^*(\cc)$ is zero, which implies $\cc$ is zero (as the cohomological endormophisms of any object are a unital module over $\r{HH}^*(\cc)$).
    Similarly, suppose $X\in \cc$ is right proper. Proposition \ref{properobjectcopairingalgebra} implies the algebraic open string copairing is non-degenerate, but by algebraic openness this can only happen if $X = 0$. The left proper case is the same.
\end{proof}

We conclude this section by showing that the categorical and object-wise algebraic openness conditions are sufficiently strong so as to persist under quotients/homological epimorphisms and idempotent summands.
\begin{prop}\label{quotientpreservesopenness}
    Let $\cc$ be (weak) smooth Calabi-Yau, and suppose $\cc$ is algebraically open. Then any quotient/homological epimorphism $\dd$ of $\cc$ inherits from $\cc$ a (weak) smooth Calabi-Yau structure, with respect to which it is also  algebraically open.
\end{prop}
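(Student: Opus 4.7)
The plan is to transfer the Calabi-Yau structure along the pushforward on Hochschild homology. Let $F\colon \cc \to \dd$ denote the homological epimorphism (or quotient functor); by definition, this yields a bimodule isomorphism $\dd \otimes_{\cc} \dd \stackrel{\sim}{\to} \dd$ (equivalently, the restriction functor on modules is fully faithful). Set $\tau := F_*(\sigma) \in \r{HH}_{-n}(\dd)$, where $F_*\colon \r{HH}_*(\cc) \to \r{HH}_*(\dd)$ is the natural pushforward. The goal is to verify in turn: (a) $\dd$ is smooth, (b) $\tau$ defines a weak smooth Calabi-Yau structure on $\dd$, and (c) $(\dd, \tau)$ is algebraically open.

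For (a), starting from a perfect resolution of $\cc_\Delta$ by Yoneda bimodules (provided by smoothness of $\cc$), applying $\dd \otimes_{\cc} - \otimes_{\cc} \dd$ produces a perfect resolution of $\dd \otimes_{\cc} \cc_\Delta \otimes_{\cc} \dd \simeq \dd \otimes_{\cc} \dd \simeq \dd_\Delta$ by Yoneda bimodules on $\dd$, the final isomorphism being the defining property of the homological epi. For (b), I would use the natural identification $\r{HH}^*(\dd, \bb) \cong \r{HH}^*(\cc, F^*\bb)$ for any $\dd$-bimodule $\bb$ (also a consequence of the epi property), together with the comparison map $F^*(\dd^!) \to \cc^!$ on inverse dualizing bimodules (an iso again by the epi property). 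A naturality diagram for the evaluation map $ev\colon \r{HH}_*(-) \to \hom_{-\!-\!-}((-)^![n], (-))$ under pushforward then identifies $ev_\dd(\tau)$ with the image of $ev_\cc(\sigma)$ under $\dd \otimes_\cc - \otimes_\cc \dd$ (suitably interpreted), which is a bimodule isomorphism since $ev_\cc(\sigma)$ is.

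For (c), the Chern character of the diagonal bimodule is natural: applying $F \otimes F^{op}$ and the isomorphism $\dd \otimes_\cc \dd \simeq \dd_\Delta$ to $ch_{\cc_\Delta} \in \r{HH}_*(\cc) \otimes \r{HH}_*(\cc)$ yields $ch_{\dd_\Delta} \in \r{HH}_*(\dd) \otimes \r{HH}_*(\dd)$, so $c_{alg}^\dd = (F_* \otimes F_*)(c_{alg}^\cc)$. The epi property furthermore supplies a unital algebra map $F^\sharp\colon \r{HH}^*(\cc) \to \r{HH}^*(\dd)$, defined as the composition $\r{HH}^*(\cc, \cc) \to \r{HH}^*(\cc, F^*\dd) \cong \r{HH}^*(\dd)$. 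A diagram chase using naturality of the cap product establishes $F^\sharp \circ CY_\cc = CY_\dd \circ F_*$, which combined with the copairing naturality shows that $(c_{alg}^\dd)^*\colon \r{HH}^*(\dd)^\vee \to \r{HH}^*(\dd)$, transferred via $CY_\dd$, factors through $F^\sharp$ applied to $\mathrm{image}((c_{alg}^\cc)^*) \subset \r{HH}^*(\cc)$. Since $F^\sharp$ is a unital algebra map and therefore preserves nilpotence, the algebraic openness of $\cc$ implies that of $\dd$.

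The main technical obstacle is verifying the compatibility $F^\sharp \circ CY_\cc = CY_\dd \circ F_*$ (equivalently, the commutativity of the square relating $\cap\,\sigma$, $\cap\,\tau$, $F_*$, and $F^\sharp$) strictly, rather than merely up to some automorphism. This is a structural naturality statement expected from general Morita-theoretic principles, and likely belongs to the collection of weak smooth Calabi-Yau compatibility lemmas the paper relegates to the Appendix; the careful bookkeeping of bimodule maps (chasing $\dd \otimes_\cc \dd \simeq \dd$ through both the $ev$ and $\cap$ constructions) is where the real work lies.
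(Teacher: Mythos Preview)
Your proposal is correct and follows essentially the same route as the paper: smoothness descends along $F_*$, the pushed-forward class $\tau = F_*\sigma$ is a weak smooth Calabi-Yau structure, the copairing satisfies $c_{alg}^{\dd} = (F_* \otimes F_*)(c_{alg}^{\cc})$, and the commuting square $F^\sharp \circ CY_{\cc} = CY_{\dd} \circ F_*$ forces the image of $(c_{alg}^{\dd})^*$ (transferred to $\r{HH}^*$) to lie in $F^\sharp$ of a nilpotent set. You even correctly anticipated that the paper defers the technical verifications (your points (a), (b), the algebra map $F^\sharp$, and the key square) to an appendix, which it does via Lemmas A.1--A.4.

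One small correction in your sketch of (b): the claim that $F^*(\dd^!) \to \cc^!$ is an isomorphism under the epi hypothesis is false in general (e.g.\ take $\cc = \K[x]$, $\dd = \K$). The correct statement, and the one the paper uses (via \cite{bravdyckerhoff}*{Lemma 4.2}), is on the pushforward side: $F_*(\cc^!) \cong (F_*\cc)^! \cong \dd^!$. Your subsequent sentence already phrases the argument this way (``the image of $ev_{\cc}(\sigma)$ under $\dd \otimes_{\cc} - \otimes_{\cc} \dd$''), so the slip is not load-bearing, but you should drop the pullback claim.
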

\begin{proof}
The algebraically open condition is equivalent to the map $CY_{\cc} \circ c^*_{alg}: \r{HH}_*(\cc)^{\vee} \to \r{HH}^{*+n}(\cc)$ having nilpotent image, so we will show this condition transfers along quotients/homological epimorphisms.
    Let $f: \cc \to \dd$ denote the quotient map (or homological epimorphism).
        Lemmas \ref{copairinglocalization} and \ref{localizeCY} imply that $\dd$ is again smooth, that the algebraic closed-string copairing on $\dd$ factors through the one on $\cc$ (i.e., $(f_{*} \otimes f_*) (c_{alg}^{\cc}) = c_{alg}^{\dd})$, that $\dd$ inherits a weak smooth Calabi-Yau structure from the one on $\cc$, and that there is a commutative diagram (using the inherited Calabi-Yau structure to define $CY_{\dd}$)
    \[
        \xymatrix{ \r{HH}_*(\cc) \ar[r]^{f_*} \ar[d]^{CY_{\cc}}_{\cong} &  \r{HH}_*(\dd) \ar[d]^{CY_{\dd}}_{\cong} \\
    \r{HH}^*(\cc) \ar[r]^{f_{\sharp}} & \r{HH}^*(\dd) 
    } \]
    It follows that $CY_{\dd} \circ (c_{alg}^{\dd})^*$ therefore factors as $f_*^{\vee}$ followed by $CY_{\cc} \circ (c_{alg}^{\cc})^*$ (whose image is nilpotent by hypothesis) followed by the map $f_{\sharp}$ (an algebra map by Lemma \ref{localizehh}); hence it has nilpotent image.
\end{proof}

\begin{lem}\label{quotientpreservesopennessobject}
    Let $\{c_{L,K}^f\}_{K,L \in perf(\cc)}$ be the collection of copairing shadows  associated to any closed morphism of bimodules $f: \cc_{\Delta} \to \cc^![n]$ above. If $L$ is algebraically open with respect to $c_L^f$, and $K$ is any idempotent summand of $L$ in $perf(\cc)$, then $K$ is algebraically open with respect to $c_K^f$.

In particular, with respect to the collection of algebraic open-string copairings  $\{c_{L,K}^{\sigma}\}_{K,L \in perf(\cc)}$ induced by a weak smooth Calabi-Yau structure, the condition of algebraic openness is inherited by idempotent summands.
\end{lem}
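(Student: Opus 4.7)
The plan is to exploit the direct sum decomposition $L \cong K \oplus K^\perp$ in the split-closed category $perf(\cc)$ (provided by the idempotent summand hypothesis) to realize $H^*(\hom_{\cc}(K,K))$ as an injective subalgebra of $H^*(\hom_{\cc}(L,L))$ and to identify the copairing $c_K^f$ as a ``block'' of $c_L^f$; nilpotency then transfers by injectivity of the embedding.

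Concretely, fix inclusions $i: K \to L$, $i_M: K^\perp \to L$ and projections $p: L \to K$, $p_M: L \to K^\perp$ with $pi = id_K$, $p_M i_M = id_{K^\perp}$, and define the (non-unital, injective) subalgebra embedding $\iota: \hom_{\cc}(K,K) \hookrightarrow \hom_{\cc}(L,L)$, $\alpha \mapsto i\alpha p$, satisfying $\iota(\alpha)^n = \iota(\alpha^n)$ for all $n$ (using $pi = id_K$), and the corresponding linear projection $\pi: \hom_{\cc}(L,L) \to \hom_{\cc}(K,K)$, $\alpha \mapsto p\alpha i$, with $\pi \circ \iota = id$. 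The crux of the argument will be the compatibility relation
\[
c_L^f = (\iota \otimes \iota)(c_K^f) + (\iota_M \otimes \iota_M)(c_{K^\perp}^f) + c_{\r{cross}},
\]
where $\iota_M: \hom_{\cc}(K^\perp, K^\perp) \hookrightarrow \hom_{\cc}(L,L)$ is the analogous embedding and $c_{\r{cross}}$ is supported in the ``off-diagonal'' Peirce summands $\hom_{\cc}(K, K^\perp) \otimes \hom_{\cc}(K^\perp, K)$ and $\hom_{\cc}(K^\perp, K) \otimes \hom_{\cc}(K, K^\perp)$ of $\hom_{\cc}(L,L)^{\otimes 2}$. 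To establish this, apply the naturality of $f$ as a $\cc$-$\cc$ bimodule morphism to the decomposition $id_L = ip + i_M p_M$, expressing $f_{L,L}(id_L)$ as a sum of images of $F_K := f_{K,K}(id_K)$ and $F_{K^\perp} := f_{K^\perp, K^\perp}(id_{K^\perp})$ under the canonical bimodule maps $Y_{K,K} \hookrightarrow Y_{L,L}$ and $Y_{K^\perp, K^\perp} \hookrightarrow Y_{L,L}$ induced by $i, p$ and $i_M, p_M$; then apply the naturality of $F_K$ and $F_{K^\perp}$ as bimodule morphisms, once again decomposing $id_L$ in the evaluation argument, to read off the four non-vanishing Peirce blocks of $c_L^f = (f_{L,L}(id_L))_{L,L}(id_L)$.

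With this compatibility in hand, for any $\phi \in H^*(\hom_{\cc}(K,K))^\vee$, extend by zero to $\tilde\phi := \phi \circ \pi \in H^*(\hom_{\cc}(L,L))^\vee$ (vanishing on all Peirce summands other than $\hom_{\cc}(K,K)$). Evaluating $(c_L^f)^*$ at $\tilde\phi$, the $(\iota_M \otimes \iota_M)(c_{K^\perp}^f)$ and $c_{\r{cross}}$ terms drop out (their first factors lie in summands annihilated by $\tilde\phi$); writing $c_K^f = \sum_j \alpha_j \otimes \beta_j$ one obtains
\[
(c_L^f)^*(\tilde\phi) = \sum_j \phi(\alpha_j)\, \iota(\beta_j) = \iota\bigl((c_K^f)^*(\phi)\bigr).
\]
By hypothesis $L$ is algebraically open, so $\iota((c_K^f)^*(\phi))^N = \iota(((c_K^f)^*(\phi))^N) = 0$ for some $N$; the injectivity of $\iota$ then forces $((c_K^f)^*(\phi))^N = 0$ in $H^*(\hom_{\cc}(K,K))$. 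Since $\phi$ was arbitrary, $K$ is algebraically open with respect to $c_K^f$; the ``in particular'' conclusion for a weak smooth Calabi-Yau structure follows by specializing to $f = ev(\sigma)^{-1}$.

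The main obstacle will be the two-layer naturality computation in establishing the compatibility relation: one must carefully track the Yoneda decomposition $Y_{L,L} \cong \bigoplus_{X, Y \in \{K, K^\perp\}} Y_{X,Y}$ together with the evaluation-argument Peirce decomposition, and verify that the ``Yoneda-matched'' constraint cuts down the sixteen a priori possible Peirce blocks of $c_L^f \in \hom_{\cc}(L,L)^{\otimes 2}$ to exactly the four appearing in the formula above.
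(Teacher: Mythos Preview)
Your proof is correct and rests on the same bimodule-naturality idea as the paper's, but is executed dually. The paper works with the \emph{projection} $\pi_{L,K}(\alpha)=[p]\alpha[i]$, shows $(\pi_{L,K}\otimes\pi_{L,K})(c_L^f)=c_K^f$, deduces $c_K^{f,*}=\pi_{L,K}\circ c_L^{f,*}\circ\pi_{L,K}^\vee$, and then concludes by asserting that $\pi_{L,K}$ is an algebra map (hence sends nilpotents to nilpotents). You instead use the non-unital but genuinely multiplicative \emph{embedding} $\iota(\alpha)=i\alpha p$ together with the finer Peirce decomposition of $c_L^f$ to obtain $(c_L^f)^*(\tilde\phi)=\iota((c_K^f)^*(\phi))$, after which nilpotency transfers via $\iota(\alpha)^n=\iota(\alpha^n)$ and injectivity of $\iota$. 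The payoff of your route is that it sidesteps a small imprecision in the paper: the map $\pi_{L,K}$ is \emph{not} an algebra homomorphism in general (e.g.\ $p\alpha\beta i\neq p\alpha i\,p\beta i$ when $\alpha,\beta$ have nontrivial off-diagonal Peirce components). What your block decomposition really establishes is that the image of $c_L^{f,*}\circ\pi_{L,K}^\vee$ already lies in the corner subalgebra $\iota(H^*\hom(K,K))$, on which $\pi_{L,K}$ restricts to the algebra isomorphism $\iota^{-1}$---this is exactly the missing justification the paper's argument needs. So your approach is a bit more work (tracking all four Peirce blocks rather than just projecting) but yields a cleaner nilpotency transfer.
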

\begin{proof}
    By Morita invariance we replace $\cc$ with $perf(\cc)$ and think of $f$ as a morphism of bimodules over $perf(\cc)$.
    Let $[p]: L \to K$ and $[i]: K \to L$ be the homological projections of $L$ onto $K$ and inclusion of $K$ into $L$, so $[i] \circ [p]: L \to L$ is an idempotent, and $[p] \circ [i] = [id_K]$. Composition ($[\mu^2]$) with $[p]$ on the right ($[\mu^2]([p],-)$) and precomposition with $[i]$ on the left ($[\mu^2](-,[i])$)  gives a map $\pi_{L,K}: H^*\hom_{\cc}(L,L) \to  H^*\hom_{\cc}(K,K)$ and hence a map $\pi_{L,K} \otimes \pi_{L,K}: H^*\hom_{\cc}(L,L) \otimes H^*\hom_{\cc}(L,L) \to H^*\hom_{\cc}^*(K,K) \otimes H^*\hom_{\cc}(K,K)$. The main claim that this map sends $c_L$ to $c_K$. Supposing it did, the proof can be completed as follows: since $\pi_{L,K}\otimes \pi_{L,K}$ sends $c_L$ to $c_K$, it follows that $c_K^*$ factors as $H^*\hom_{\cc}(K,K)^{\vee}[-n] \stackrel{\pi_{L,K}^*}{\to} H^*\hom_{\cc}(L,L)^{\vee}[-n] \stackrel{c_L^*}{\to} H^*\hom_{\cc}(L,L) \stackrel{\pi_{L,K}}{\to} H^*\hom_{\cc}(K,K)$. Since the last map $\pi_{L,K}$ is an algebra map and hence sends nilpotent elements to nilpotent elements, we are done.

    Finally, the claim that the map $\pi_{L,K}$ sends $c_L$ to $c_K$ is an elementary consequence of the map $f$ being a closed morphism of $\ainf$ bimodules. To spell this out, it is convenient to recast the induced map on chain complexes $f_{A,B}(-)_{C,D}$ (by hom-tensor adjunction) as a map $f_{A,B,C,D}: H^*(\hom(A,B)) \otimes H^*(\hom(C,D)) \to H^*(\hom(A,D)) \otimes H^*(\hom(C,B))$ for every tuple of objects $A$,$B$,$C$,$D$, compatible with the homology compositions on the right and left (since $f$ is a closed morphism). By definition, $c_L: = f_{L,L,L,L}([id_L] \otimes [id_L])$, and $\pi_{L,K} \otimes \pi_{L,K} = (\mu^2(-, [i]) \circ \mu^2([p], -))^{\otimes 2}$, Hence, iteratively appealing to the compatibility with multiplication (using the abuse of notation $\mu^2$ for $[\mu^2]$), we eventually learn that $\pi_{L,K} \otimes \pi_{L,K} \circ f_{L,L,L,L}([id_L] \otimes [id_L]) = f_{K,K,K,K}(\mu^2([i], \mu^2([id_L], [p])) \otimes \mu^2([i], \mu^2([id_L], [p]))) = f_{K,K,K,K}(\mu^2([i],[p]) \otimes \mu^2([i], [p])) = f_{K,K,K,K}([id_K] \otimes [id_K]) = c_K$.
\end{proof}
Similarly, algebraic openness of an object is preserved by quotients/homological epiomorphisms:
\begin{lem}\label{openobjectquotient}
    If $(\cc, \sigma)$ is a weak smooth Calabi-Yau category and let $f: \cc \to \dd$ any quotient/homological epimorphism.  If $X \in \cc$ is algebraically open, then $fX \in \dd$ is algebraically open (with respect to the weak smooth Calabi-Yau structure $f_*\sigma$ induced by the map from $\cc$ by Lemma \ref{localizeCY}).
\end{lem}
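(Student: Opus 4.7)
The plan is to adapt the strategy of Proposition \ref{quotientpreservesopenness} (and in spirit Lemma \ref{quotientpreservesopennessobject}) to the current setting combining a single object with a quotient/homological epimorphism. Set $F := [f_{X,X}]: H^*\hom_\cc(X,X) \to H^*\hom_\dd(fX,fX)$, the unital algebra map on cohomological endomorphisms induced by the cohomologically unital $\ainf$ functor $f$. The entire argument rests on the key identity
\begin{equation*}
    (F \otimes F)\bigl(c_X^\sigma\bigr) \;=\; c_{fX}^{f_*\sigma} \quad \text{in} \quad H^*\hom_\dd(fX,fX) \otimes H^*\hom_\dd(fX,fX)[n].
\end{equation*}

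Granting this identity, the conclusion is immediate: for any $\phi \in H^*\hom_\dd(fX,fX)^\vee[-n]$, one has
\begin{equation*}
    (c_{fX}^{f_*\sigma})^*(\phi) \;=\; (\phi \otimes \id)\bigl((F \otimes F)(c_X^\sigma)\bigr) \;=\; F\bigl((c_X^\sigma)^*(F^\vee \phi)\bigr),
\end{equation*}
so $(c_{fX}^{f_*\sigma})^*$ factors as $F^\vee$ followed by $(c_X^\sigma)^*$ followed by $F$. The image of $(c_X^\sigma)^*$ lies in the nilpotent elements of $H^*\hom_\cc(X,X)$ by the algebraic openness hypothesis on $X$, and the unital algebra map $F$ sends nilpotents to nilpotents; hence the image of $(c_{fX}^{f_*\sigma})^*$ is nilpotent, so $fX$ is algebraically open.

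The main obstacle, and essentially the only nontrivial content, is verifying the key identity. The input is Lemma \ref{localizeCY}, which by its construction identifies $ev(f_*\sigma): \dd^![n] \xrightarrow{\sim} \dd_\Delta$ with the image of $ev(\sigma): \cc^![n] \xrightarrow{\sim} \cc_\Delta$ under the natural pullback/pushforward operations on bimodules associated with the homological epimorphism $f$; inverting gives the analogous compatibility between $ev(\sigma)^{-1}: \cc_\Delta \to \cc^![n]$ and $ev(f_*\sigma)^{-1}: \dd_\Delta \to \dd^![n]$. The copairing shadow construction from \S\ref{subsec:copairings}, which sends a bimodule morphism $g: \cc_\Delta \to \cc^![n]$ to a class extracted by feeding in cohomological identities and projecting to length-$0$ components of bar complexes, is manifestly natural under restriction along $\ainf$ functors (the length-$0$ projection and the substitution of identities commute with the functor-induced maps on morphism spaces). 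Applying $F \otimes F$ at the pair $(X,X)$ therefore converts the copairing shadow of $ev(\sigma)^{-1}$ at $(X,X)$ into the copairing shadow of $ev(f_*\sigma)^{-1}$ at $(fX,fX)$, yielding the desired identity.
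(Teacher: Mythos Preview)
Your proposal is correct and follows essentially the same approach as the paper: the paper invokes Lemma~\ref{opencopairingquotient} for the key identity $(F\otimes F)(c_X^\sigma)=c_{fX}^{f_*\sigma}$ and then appeals to the factorization-through-an-algebra-map argument from the start of Lemma~\ref{quotientpreservesopennessobject}, which is exactly what you carry out. Your justification of the key identity via Lemma~\ref{localizeCY} and naturality of the copairing-shadow construction is precisely the content of Lemma~\ref{opencopairingquotient}.
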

\begin{proof}
    Lemma \ref{opencopairingquotient} implies that $f$ sends the open-string copairing on $X \in (\cc, \sigma)$ to the one on $fX \in (\dd, f_*\sigma)$, after which the proof follows the same lines as the start of the previous Lemma.
\end{proof}

\section{Comparing algebraic and geometric copairings}\label{sec:geometrytoalgebra}
Returning to symplectic geometry, let $X$ be a non-degenerate Liouville manifold and $\w(X)$ its wrapped Fukaya category. We review (and in a couple cases spell out) a series of results from \cite{ganatra1_arxiv, rezchikov} that allow one to compare the geometric copairings studied in \S \ref{sec:degeneracylem} and the algebraic copairings studied in \S \ref{sec:copairingcy}.

Let us recall that there is a geometrically defined {\em open-closed map}
\[
    \oc: \r{HH}_{*-n}(\w(X)) \to SH^*(X),
\]
(as say defined in \cite{abouzaid_generation}), as well as a {\em closed-open map} (see e.g., \cite{seidelicm, ganatra1_arxiv})
\[
    \co: SH^*(X) \to \r{HH}^*(\w(X)).
\]
The first series of results we'll need are:
\begin{thm}[\cite{ganatra1_arxiv}, see also \cite{Gcircleactions} Thm. 3]\label{ociso}
    If $X$ is non-degenerate, then $\oc$ and $\co$ are isomorphisms, $\co$ is a unital ring map, $\w(X)$ is smooth and the element $\sigma_{\oc} \in \r{HH}_{-n}(\w(X))$ given by $\oc^{-1}(1)$ is a weak smooth Calabi-Yau structure. Furthermore the composed isomorphism $\co \circ \oc: \r{HH}_{*-n}(\w(X)) \to \r{HH}^*(\w(X))$ is homologically equal to the inverse to capping with $\sigma_{\oc}$, which we have denoted $CY_{\sigma_{\oc}}$.
\end{thm}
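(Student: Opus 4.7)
My plan is to take as symplectic input only the non-degeneracy hypothesis, exploited through Abouzaid's generation criterion \cite{abouzaid_generation}, and combine it with a series of algebraic/TQFT compatibilities between $\oc$ and $\co$ (established at the chain level via moduli-space degeneration arguments, independent of non-degeneracy) to bootstrap all four conclusions. Specifically, I would take as independently established: (a) $\co$ is a chain-level unital ring map; (b) the $SH^*(X)$-action on $\r{HH}_*(\w(X))$ induced from $\co$ via cap product makes $\oc$ into an $SH^*(X)$-module map; (c) $\oc$ refines to a closed morphism of $\w$-bimodules $\widetilde{\oc}: \w_{\Delta} \to \w^![n]$ whose evaluation at diagonal bimodule coefficients recovers $\oc$; and (d) the composition $\co \circ \oc$ agrees at the chain level with the map ``cap with $\sigma_{\oc}$'' composed with the canonical Yoneda-type identifications. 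These are all Cardy-relation-style identities arising from standard surface gluings.

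The first step is to prove $\oc$ is surjective. By non-degeneracy, some full subcategory $\A \subset \w(X)$ satisfies Abouzaid's criterion, placing $1 \in SH^*(X)$ in the image of $\oc|_{\A}$ and hence of $\oc$. Since $\oc$ is $SH^*(X)$-module linear and $SH^*(X)$ is unital, its image is a submodule of $SH^*(X)$ containing $1$ and therefore equal to $SH^*(X)$. I would then promote this to a bimodule statement: rerunning Abouzaid's argument with ``bimodule coefficients'' in Yoneda bimodules $Y_{K,L}$ would show that the evaluation $\widetilde{\oc}_{K,L}: HW^*(K,L) \to \r{HH}^{*+n}(\w, Y_{K,L})$ is a quasi-isomorphism for every $K,L$. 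Hence $\widetilde{\oc}$ is a bimodule quasi-isomorphism. This simultaneously gives smoothness of $\w(X)$ (since $\w^!$ is then equivalent to a bimodule which, because $\A$ split-generates, is built from representable bimodules) and exhibits $\sigma_{\oc} := \oc^{-1}(1)$ as inducing the required bimodule isomorphism $\w^![n] \simeq \w_{\Delta}$, i.e.\ as a weak smooth Calabi-Yau structure.

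Finally, the ring-map property and unitality of $\co$ are chain-level TQFT facts independent of non-degeneracy, while the identity $\co \circ \oc = CY_{\sigma_{\oc}}$ follows from the chain-level Cardy relation (d) combined with the definition of $CY_{\sigma_{\oc}}$ as the inverse of capping with $\sigma_{\oc}$. Since $CY_{\sigma_{\oc}}$ is an isomorphism by the weak Calabi-Yau property and $\oc$ is an isomorphism by the previous step, $\co$ is forced to be an isomorphism as well. The principal obstacle is the promotion of surjectivity of $\oc$ at the diagonal to a quasi-isomorphism statement for $\widetilde{\oc}$ at all Yoneda bimodule coefficients: surjectivity at the Hochschild homology level does not formally imply anything at the bimodule level, so one must genuinely re-execute Abouzaid's split-generation argument in a bimodule-enhanced form (roughly, exhibiting a chain homotopy whose ``interior marked point'' carries a cycle hitting $1$, then propagating that through bar-complex identifications). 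Carefully establishing the precise chain-level Cardy relations of (c) and (d) --- and not just their cohomological shadows --- is the other substantial piece of work underlying the whole argument.
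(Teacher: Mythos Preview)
The paper does not give its own proof of this statement: Theorem~\ref{ociso} is stated as a citation to \cite{ganatra1_arxiv} (see also \cite{Gcircleactions}) and used as a black box. Your sketch is broadly in the spirit of the argument in those references --- in particular, the bimodule morphism you call $\widetilde{\oc}$ is precisely the geometric map $\mc{CY}:\w_\Delta\to\w^![n]$ that the present paper recalls just above Lemma~\ref{cyshadow}, and the heart of the cited argument is indeed a Cardy-type identity showing that $\mc{CY}$ and capping with a chosen preimage $\sigma$ of $1$ are mutually inverse bimodule morphisms.

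That said, a few points in your write-up are muddled and would not go through as stated. First, item~(c) asserts that $\widetilde{\oc}$ ``evaluated at diagonal bimodule coefficients recovers $\oc$''; but $\oc$ lands in $SH^*(X)$, whereas evaluating $\widetilde{\oc}$ lands in $\r{HH}^*(\w,Y_{K,L})$ --- these are different targets, and the link between $\mc{CY}$ and $\oc$ is a genuine compatibility theorem, not a specialization. Second, item~(d) is stated backwards and circularly: you claim $\co\circ\oc$ agrees with ``cap with $\sigma_{\oc}$'' and then conclude it equals $CY_{\sigma_{\oc}}$, which is by definition the \emph{inverse} of capping --- that is a non-sequitur. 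Moreover, $\sigma_{\oc}:=\oc^{-1}(1)$ cannot appear in a ``chain-level identity independent of non-degeneracy,'' since its very definition presupposes $\oc$ is an isomorphism. The honest chain-level input is rather that for \emph{any} cycle $\sigma$ with $\oc(\sigma)=1$, the composite $(-\cap\sigma)\circ\mc{CY}$ is homotopic to the identity on $\w_\Delta$; this is what simultaneously yields smoothness, the weak Calabi--Yau property, and (after further argument) the isomorphism statements. Finally, you pass from ``$\widetilde{\oc}$ is a bimodule quasi-isomorphism'' directly to ``$\oc$ is an isomorphism by the previous step,'' but these are different maps with different targets; bridging them requires an additional compatibility (relating $\mc{CY}$ at the diagonal, the open--closed map, and $\co$), which is part of what \cite{ganatra1_arxiv} establishes but which your outline leaves implicit.
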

Theorem \ref{ociso} allows one to situate $\w(X)$ within the setting of \S
\ref{sec:copairingcy} and in particular equip it with algebraic open and
closed-string copairings. The comparison of closed-string copairings we need is:
\begin{thm}[Rezchikov, in preparation \cite{rezchikov}]\label{occopairing}
    If $X$ is non-degenerate, then the map $\oc$ sends the algebraic
    closed-string copairing $c_{alg}$ defined on $\w(X)$ (as above) to the geometric copairing $c_{SH}$ on
    $SH^*(X)$.
\end{thm}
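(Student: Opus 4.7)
The plan is to introduce an auxiliary geometric operation, a ``two-output open-closed map,'' that computes both sides simultaneously through two opposite degenerations of its defining moduli space.

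First, I would define
\begin{equation*}
    \oc^{(2)}: \r{HH}_{*-2n}\bigl(perf(\w(X) \otimes \w(X)^{op})\bigr) \lra SH^*(X) \otimes SH^*(X)[2n]
\end{equation*}
by counting pseudoholomorphic maps from genus-zero surfaces equipped with two interior cylindrical output ends (asymptotic to Reeb orbits) and two disjoint boundary cycles, each labeled cyclically by wrapped Floer generators between Lagrangians, so that each boundary cycle encircles one of the interior output punctures. Using the K\"{u}nneth isomorphism together with the op-invariance $\r{HH}_*(\w(X)^{op}) \cong \r{HH}_*(\w(X))$, I would show that $\oc^{(2)} = \oc \otimes \oc$ via the degeneration in which the two interior output punctures are driven apart along a separating neck: a standard neck-stretching/TQFT gluing argument splits the domain into two copies of the standard open-closed map domain, with a broken cylinder mediating Reeb orbits at the neck. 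Applying the resulting equality to the Chern character $c_{alg} = ch_{\w_\Delta} \in \r{HH}_*(\w(X)) \otimes \r{HH}_*(\w(X))$ of the diagonal bimodule yields $\oc^{(2)}(c_{alg}) = (\oc \otimes \oc)(c_{alg})$.

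Next, I would consider the opposite degeneration of the domain of $\oc^{(2)}$: bring the two interior output punctures together. The limit is a nodal surface consisting of a two-output sphere (with one input attached along the node) glued to a standard open-closed-map domain (a disc with one interior output and cyclic boundary punctures) at its single interior output. Geometrically, the sphere factor realizes the TQFT coproduct on $SH^*(X)$, while the disc factor, evaluated on $ch_{\w_\Delta}$, computes $\oc(ch_{\w_\Delta}) = \oc(\sigma_{\oc}) = 1 \in SH^*(X)$ by Theorem \ref{ociso}. Hence this degeneration produces the coproduct applied to the unit, which by the description recalled in \S \ref{sec:degeneracylem} (following \cite{ritterTQFT}) is precisely the geometric closed-string copairing $c_{SH}$. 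Comparing the two degenerations yields $(\oc \otimes \oc)(c_{alg}) = c_{SH}$.

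The main obstacle will be the rigorous setup of the moduli spaces of these two-output domains, including verification of transversality and compactness, identification of the boundary strata of the one-parameter family interpolating between the two degenerations, and careful handling of the cyclic bar structure implicit in the Chern character. As in the proof of Theorem \ref{ociso}, non-degeneracy is used to reduce computations to a finite collection of split-generators, and one must track signs and units through the Abouzaid-type generation formalism. This delicate TQFT argument is the content of the forthcoming work \cite{rezchikov}.
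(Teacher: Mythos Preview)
The paper does not prove this theorem; it is stated as a result of Rezchikov in preparation (\cite{rezchikov}) and used as a black box. So there is no proof in the paper to compare your proposal against.

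That said, your outline has a genuine gap in the second degeneration. Your domain is a genus-zero surface with \emph{two} boundary circles and two interior output punctures (topologically an annulus with two interior punctures). When you bubble off a sphere by colliding the two interior punctures, the remaining piece is an annulus with one interior output and \emph{two} boundary cycles --- not ``a standard open-closed-map domain (a disc with one interior output and cyclic boundary punctures)'' as you write. Consequently the remaining piece does not compute $\oc$ of a single Hochschild class, and there is no reason for its value on the Chern character of the diagonal bimodule to be $1\in SH^*(X)$. Your appeal to Theorem~\ref{ociso} is also misplaced: that theorem gives $\oc(\sigma_{\oc})=1$ for $\sigma_{\oc}:=\oc^{-1}(1)\in\r{HH}_{-n}(\w(X))$, whereas $ch_{\w_\Delta}$ lives in $\r{HH}_*(perf(\w(X)^{op}\otimes\w(X)))\cong\r{HH}_*(\w(X))\otimes\r{HH}_*(\w(X))$, a different object altogether, and nothing in the paper identifies the two. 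The first degeneration (neck-stretching to obtain $\oc\otimes\oc$ via K\"unneth) is a plausible ingredient, but a correct argument along these lines must analyze the annulus-with-one-output piece as a genuine two-boundary open-closed operation and relate \emph{that} to the unit of $SH^*(X)$; this is an additional Cardy-type statement, presumably part of what \cite{rezchikov} supplies.
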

A corollary of Theorem \ref{ociso} and Theorem \ref{occopairing} is:
\begin{cor}\label{cor:algebracopairing}
    Using the Calabi-Yau isomorphism $CY_{\sigma_{\oc}}:\r{HH}_{*-n}(\w(X)) \to \r{HH}^*(\w(X))$ to think of Hochschild cohomology as a unital algebra with copairing, $\co$ is an isomorphism of unital algebras with copairing.
\end{cor}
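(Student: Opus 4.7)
The corollary should be a short diagram chase combining Theorems \ref{ociso} and \ref{occopairing}. The unital algebra isomorphism content is already supplied by Theorem \ref{ociso}: $\co$ is stated there to be a unital ring map, and it is an isomorphism. So the only genuinely new content to verify is that $\co$ intertwines the geometric closed-string copairing $c_{SH}$ on $SH^*(X)$ with the copairing on $\r{HH}^*(\w(X))$ obtained by transporting the algebraic closed-string copairing $c_{alg}$ on $\r{HH}_{*-n}(\w(X))$ along the Calabi-Yau isomorphism $CY_{\sigma_{\oc}}$ in each tensor factor.

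To verify this, I would simply start from the equality $(\oc \otimes \oc)(c_{alg}) = c_{SH}$ provided by Theorem \ref{occopairing}, and apply $\co \otimes \co$ to both sides. The right-hand side becomes $(\co \otimes \co)(c_{SH})$, while the left-hand side becomes $((\co \circ \oc) \otimes (\co \circ \oc))(c_{alg})$. By the last clause of Theorem \ref{ociso}, $\co \circ \oc$ is homologically equal to $CY_{\sigma_{\oc}}$, so the left-hand side simplifies to $(CY_{\sigma_{\oc}} \otimes CY_{\sigma_{\oc}})(c_{alg})$, which is by definition the copairing on $\r{HH}^*(\w(X))$ transferred from $c_{alg}$ via the Calabi-Yau isomorphism. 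This yields the required identity.

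I do not anticipate any real obstacle: the corollary is essentially a bookkeeping consequence of Theorems \ref{ociso} and \ref{occopairing}. The one point worth flagging is that the argument uses only the homological identification of $\co \circ \oc$ with $CY_{\sigma_{\oc}}$, so no chain-level lift or refinement of that identification is needed.
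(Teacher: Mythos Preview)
Your proposal is correct and mirrors the paper's proof almost verbatim: the paper also notes that $\co$ is already a unital algebra isomorphism by Theorem \ref{ociso}, and then observes $CY_{\sigma_{\oc}}^{\otimes 2}(c_{alg}) = (\co \circ \oc)^{\otimes 2}(c_{alg}) = \co^{\otimes 2}(c_{SH})$ using Theorems \ref{ociso} and \ref{occopairing} in the same way.
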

\begin{proof}
    $\co$ is a already a unital algebra isomorphism, so we just need to observe that the algebraic copairing on Hochschild cohomology, by definition $CY_{\sigma_{\oc}}^{\otimes 2} (c_{alg})$ is by Theorem \ref{ociso} equal to $(\co \circ \oc)^{\otimes 2}(c_{alg})$ which by Theorem \ref{occopairing} is $\co^{\otimes 2}(c_{SH})$ as desired.
\end{proof}

Next we turn to comparing the (algebraic and geometric) open-string copairings on $(\w(X), \sigma_{\oc})$. The relevant comparison is implicit in \cite{ganatra1_arxiv} as we now explain. In {\em loc. cit.} a geometric morphism of bimodules $\mc{CY}: \w(X) \to \w(X)^![n]$ was constructed (called in {\em loc. cit.} the ``non-compact Calabi-Yau morphism''), by counting discs with two outputs and arbitrarily many inputs in between, with one distinguished input on each component of the boundary minus outputs (fixing the cross ratio of the 4 special points, the two distinguished inputs and the two outputs). Like the open-string copairing, this morphism arises from counting discs with two outputs; in particular
\begin{lem}\label{cyshadow}
    The geometric open-string copairing \eqref{opencopairing} is the copairing shadow $c_{K,L}^{\mc{CY}}$ of $\mc{CY}$ in the sense defined in \S \ref{subsec:copairings}.
\end{lem}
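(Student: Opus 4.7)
The plan is to unpack both sides of the claimed equality and compare them geometrically, exploiting the unit axiom for distinguished boundary inputs. Recall from \S \ref{subsec:copairings} that the copairing shadow $c_{K,L}^{\mc{CY}}$ is assembled in two stages: first one evaluates the $\ainf$ bimodule morphism $\mc{CY}$ at $(L,L)$ on a cocycle representing $[id_L]$, projecting to the length-zero part of $\mc{CY}_{L,L}([id_L]) \in \w(X)^![n](L,L)$; then one feeds the resulting bimodule morphism $\w(X) \to \hom(-,L)\otimes \hom(L,-)[n]$ into the pair $(K,K)$, evaluated on a cocycle for $[id_K]$, and again projects onto the length-zero part.

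Geometrically, $\mc{CY}$ is defined by counting discs with two outputs, arbitrarily many ordinary inputs, plus two distinguished inputs (one on each boundary arc between outputs). The length-zero projections occurring in the formation of the copairing shadow correspond exactly to discarding all non-distinguished inputs. Hence, before feeding in the identity cocycles, $c_{K,L}^{\mc{CY}}$ is computed by the contribution of those parameterized moduli spaces in $\mc{CY}$ with two outputs (on the Lagrangians $K$ and $L$, the two outputs being paired appropriately with $\hom(K,L)$ and $\hom(L,K)$) and only the two distinguished inputs; each of these distinguished inputs is then fed by a cocycle representing $[id_L]$ and $[id_K]$ respectively.

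The crux of the proof is then the standard ``unit insertion'' identity in wrapped Floer theory: the cohomological units $[id_L]$, $[id_K]$ are represented by the counts of unstable discs with a single output on $L$, $K$. Gluing such unstable-disc moduli spaces onto the two distinguished input marked points of $\mc{CY}$ produces (up to boundary terms coming from sphere/disc bubbles and strip breaking, which cancel against the differential) precisely the moduli space of stable discs with two outputs and one interior marked point whose cross-ratio with the two outputs and two ``forgotten'' distinguished inputs is fixed. One then verifies, along the lines of forgetful-map arguments for $A_\infty$ units, that this moduli space is cobordant (in the parameterized sense, through moving the collapsing distinguished inputs to the boundary) to the moduli space of discs with two outputs and one unconstrained interior marked point with fixed cross-ratio, which by the second description in the definition of \eqref{opencopairing} computes exactly the geometric open-string copairing $c_{K,L}$.

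The main obstacle will be organizing this unit-insertion argument so that the boundary degenerations of the parameterized moduli space match cleanly, on the chain level, with the differentials appearing in the copairing shadow construction (the two successive ``feed in a cocycle for the identity, then project to length zero'' steps). Concretely, one must check that the codimension-one strata of the glued moduli space consist of: (i) disc bubbling that reproduces the images of $[id_L]$, $[id_K]$ under the relevant bimodule differentials, which vanish cohomologically since $[id_L]$, $[id_K]$ are cocycles; (ii) strip breaking at the two outputs, which matches the action of composition on the copairing shadow and corresponds to $\mc{CY}$ being a closed bimodule morphism; and (iii) collisions of distinguished inputs with each other or with outputs, which are the transitions between the two cross-ratio descriptions of the geometric copairing moduli space and contribute the sought-after chain homotopy. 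Modulo these standard verifications the identification $c_{K,L} = c_{K,L}^{\mc{CY}}$ is immediate.
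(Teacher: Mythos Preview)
Your proposal is correct and follows essentially the same approach as the paper: unpack the copairing shadow as the lowest-order (no auxiliary inputs) term of $\mc{CY}$ fed the two cohomological units, then invoke a unit-insertion gluing argument to identify this with the two-output disc count defining the geometric copairing. The paper compresses your boundary-strata analysis into the phrase ``by a standard gluing argument,'' so your version is simply a more explicit rendering of the same idea.
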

\begin{proof}
    The copairing shadow $c^{\mc{CY}}_{K,L}$ of the morphism $\mc{CY}$ for a pair of objects $K,L$ is by definition 
    $(\mc{CY}_{K,K})(id_K))_{L,L}(id_L)$.
    This corresponds to counts of discs with two inputs and two outputs arranged in alternating fashion (the lowest order term in $\mc{CY}$), with intputs given by homological units for $K$ and $L$ (given by a count of unstable discs with one output for each Lagrangian as e.g., described in {\em loc. cit.}). By a standard gluing argument, this is chain homotopy equivalent to the geometric copairing as desired.
\end{proof}

\begin{prop}\label{copairingCY}
    Let $X$ be a non-degenerate Liouville manifold and let $(\mc{W}(X), \sigma_{\oc}:=\oc^{-1}(1))$ be its wrapped Fukaya category with its geometric weak smooth Calabi-Yau structure as in Theorem \ref{ociso}. 
    Then, for any $K,L \in \w(X)$, the geometric open-string copairing \eqref{opencopairing} is equal to the algebraic open-string copairing $c_{K,L}^{\sigma_{\oc}}$ induced by the element $\sigma_{\oc}$.
\end{prop}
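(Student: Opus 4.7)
The plan is to reduce the proposition to an equality of cohomology classes of bimodule morphisms, and then extract that equality from the construction of $\sigma_{\oc}$ in \cite{ganatra1_arxiv}. By Lemma \ref{cyshadow}, the geometric open-string copairing coincides with the copairing shadow $c^{\mc{CY}}_{K,L}$ of the non-compact Calabi-Yau bimodule morphism $\mc{CY}: \w(X) \to \w(X)^![n]$. By definition, the algebraic open-string copairing $c^{\sigma_{\oc}}_{K,L}$ is the copairing shadow of $ev(\sigma_{\oc})^{-1}: \w(X) \to \w(X)^![n]$. Since the copairing shadow depends only on the cohomology class of its input morphism (as observed in Section \ref{subsec:copairings}), the claim reduces to the bimodule-level equality
\[
    [\mc{CY}] = [ev(\sigma_{\oc})^{-1}] \quad \text{in } H^0\hom_{\w(X)\!-\!\w(X)}(\w(X), \w(X)^![n]),
\]
or equivalently that the composition $[ev(\sigma_{\oc})] \circ [\mc{CY}]$ is the identity endomorphism of $\w(X)_{\Delta}$.

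To establish this equality, I would appeal to the construction of $\sigma_{\oc} := \oc^{-1}(1)$ as a weak smooth Calabi-Yau structure in \cite{ganatra1_arxiv}. The natural geometric strategy in \emph{loc. cit.} for verifying that $\sigma_{\oc}$ is a CY structure is to exhibit $\mc{CY}$ as a bimodule-level homotopy inverse to $ev(\sigma_{\oc})$: that is, $\sigma_{\oc}$ is set up precisely so that $ev(\sigma_{\oc}) \circ \mc{CY} \simeq \id_{\w(X)_{\Delta}}$ and $\mc{CY} \circ ev(\sigma_{\oc}) \simeq \id_{\w(X)^![n]}$ as bimodule morphisms, which is exactly the desired equality. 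As a consistency check: under the smoothness identification $\r{HH}_{*}(\w(X), \w(X)^![n]) \cong \r{HH}^{*+n}(\w(X))$, the map on Hochschild homology induced by $\mc{CY}$ is (geometrically) computed by the composition $\co \circ \oc$, which by Theorem \ref{ociso} equals $CY_{\sigma_{\oc}}$, the inverse to cap product with $\sigma_{\oc}$; this is in turn the Hochschild-level map induced by $ev(\sigma_{\oc})$, matching the proposed bimodule-level inverseness.

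The principal obstacle I expect is carefully extracting the bimodule-morphism level identity from \cite{ganatra1_arxiv}, where the relevant compatibilities are often phrased in terms of $\oc$, $\co$, and induced maps on Hochschild homology rather than via explicit bimodule homotopies. If the bimodule-level statement is not made directly explicit in \emph{loc. cit.}, an alternative route is to leverage the smoothness isomorphism $ev: \r{HH}_{-n}(\w(X)) \xrightarrow{\sim} H^0\hom_{\w(X)\!-\!\w(X)}(\w(X)^![n], \w(X))$: one pins down the cohomology class of a bimodule morphism $\w(X)^![n] \to \w(X)$ by its action on a distinguished Hochschild class (e.g., the Chern character $ch_{\w(X)_{\Delta}}$, or equivalently the identity $[\id] \in \r{HH}^0(\w(X))$), and uses the above Hochschild-level identification to show $ev(\sigma_{\oc})$ and $[\mc{CY}]^{-1}$ have the same such action. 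Once $[\mc{CY}] = [ev(\sigma_{\oc})^{-1}]$ is established, the equality of copairing shadows $c^{\mc{CY}}_{K,L} = c^{\sigma_{\oc}}_{K,L}$ at every pair $(K,L)$ follows immediately.
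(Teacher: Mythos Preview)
Your proposal is correct and follows essentially the same approach as the paper: reduce to the bimodule-level equality $[\mc{CY}] = [ev(\sigma_{\oc})^{-1}]$, cite this from \cite{ganatra1_arxiv}, and conclude via Lemma \ref{cyshadow}. The paper adds that this inverse relation, while slightly implicit in \cite{ganatra1_arxiv}, is spelled out explicitly in \cite{Gcircleactions}*{Thm.~3}, which resolves precisely the ``principal obstacle'' you anticipated.
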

\begin{proof}
    We recall that in \cite{ganatra1_arxiv} the geometric map $\mc{CY}: \w(X) \to
    \w(X)^![n]$ described above was proven, under the given non-degeneracy
    hypotheses, to be inverse to the cap product isomorphism $ev(\sigma_{\oc})$
    induced by the weak Calabi-Yau structure $\sigma_{\oc}$ (this is slightly
    implicit in {\em loc. cit.}, but spelled out in \cite{Gcircleactions}*{Thm.
3}). Hence the $\sigma_{\oc}$-induced algebraic open-string copairing is by
definition the copairing shadow of the map $\mc{CY}$, which Lemma
\ref{cyshadow} verifies is
    precisely the open-string copairing \eqref{opencopairing}. 
\end{proof}

\section{Proof of the Main Theorem}\label{sec:proof}

The proof of the Main Theorem follows immediately from the above results, for instance (1) amounts to the incompatibility of having a simultaneously smooth, proper, and non-vanishing non-degenerate wrapped Fukaya category (whose algebraic hence geometric copairing must be non-degenerate) with the known degeneracy property of the geometric copairing.  In all cases the arguments proceed by showing properness implies vanishing.
\begin{proof}[Proof of Main Theorem]
    Assume $\w(X)$ is non-degenerate as hypothesized. Starting with (1), suppose $\w(X)$ is furthermore proper; we need to show that then $\w(X) = 0$. By Proposition \ref{shklyarovprop} properness implies the algebraic closed string copairing on $\r{HH}_{*-n}(\w(X))$ is non-degenerate, which implies by Theorem \ref{occopairing} that the geometric copairing $c_{SH}$ is non-degenerate. Since the degeneracy Lemma (Theorem \ref{degeneratecopairing}) establishes the geometric copairing is entirely degenerate, this can only happen if $SH^*(X) = 0$ which implies $\w(X) = 0$ by a standard argument (each $HW^*(L,L)$ is a unital module over $SH^*(X)$) as desired.
    To generalize to (1'), note that the degeneracy Lemma (Theorem \ref{degeneratecopairing}) implies in light of Corollary \ref{cor:algebracopairing} 
    that $(\w(X), \sigma_{\oc})$ is {\em algebraically open} in the sense of Definition \ref{def:algebraicallyopen}, a property which
    by Proposition \ref{quotientpreservesopenness} 
    is inherited by quotients or homological epimorphisms, and which by Corollary \ref{opennonproperzero} also implies non-properness or vanishing.

    For (2), let's assume without loss of generality that a given open exact Lagrangian $L$ is right proper (the left proper case is the same, or can be handled by consider $L$ as a right proper object in $\w(X^-) = \w(X)^{op}$ which is also non-degenerate). Propositions \ref{copairingCY} and \ref{properobjectcopairingalgebra} therefore imply its geometric copairing is non-degenerate, whereas Theorem \ref{degeneratecopairingopenstring} implies its geometric copairing is entirely degenerate, from which it follows that $L=0$. For (2'), we think of Theorem \ref{degeneratecopairingopenstring} and Proposition \ref{copairingCY} as implying that $L$ is algebraically open in the sense of Definition \ref{def:algebraicallyopen} a property which is both inherited by idempotent summands (by Proposition \ref{quotientpreservesopennessobject}) and also implies by Corollary \ref{opennonproperzero} non-properness or vanishing.
\end{proof}

Turning towards Question \ref{infiniteorzeroSH} (and Question \ref{infiniteorzero})  we note that the same argument would not imply that if $SH^*(X)$ was finite dimensional, it was zero: the point is that $SH^*(X)$ does not inherit a canonical pairing when it is finite dimensional, but very notably the Hochschild homology of a proper category does (which plays a crucial role in establishing non-degeneracy of the algebraic copairing in Proposition \ref{shklyarovprop}, and hence the geometric copairing by Theorem \ref{occopairing}). Similarly, $HW^*(L,L)$ does not inherit a pairing if it is finite-dimensional but it does if $L$ is left or right proper. That being said, we can articulate the following alternatives which are immediate consequences of the Main Theorem (and the open-closed isomorphisms):
\begin{cor} \label{zeroorextalternative}
    Given a non-degenerate Liouville (e.g., a Weinstein) manifold $X$, and an open exact Lagrangian $L \subset X$,
    \begin{enumerate}
\item[(1)]$SH^*(X)$ is either zero or equal to the Hochschild cohomology of a smooth, non-proper (and ``algebraically open'') Calabi-Yau category $\w(X)$. 
\item[(2)] $HW^*(L,L)$ is either zero or equal to the self-Ext of a perfect, non-proper (and ``algebraically open'') module over a non-proper Calabi-Yau category $\w(X)$.
    \end{enumerate}
\end{cor}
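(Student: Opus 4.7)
The plan is to read off both statements directly from the Main Theorem together with the isomorphisms and structural results assembled in Section \ref{sec:geometrytoalgebra}. In each case one enumerates the alternatives supplied by the Main Theorem and then transports them across the open-closed and closed-open isomorphisms, identifying the vanishing alternative on the Floer side with the vanishing alternative on the categorical side.

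For part (1), I would first recall from Theorem \ref{ociso} that, under the non-degeneracy hypothesis, the closed-open map $\co : SH^*(X) \to \r{HH}^*(\w(X))$ is a unital algebra isomorphism. If $\w(X) = 0$ then $\r{HH}^*(\w(X)) = 0$ and hence $SH^*(X) = 0$; conversely $SH^*(X) = 0$ forces $\w(X) = 0$ by the standard argument that each $HW^*(K,K)$ is a unital module over $SH^*(X)$ via $\co$ composed with the projection to length-zero Hochschild cochains. This identifies the two vanishing alternatives. In the non-vanishing case, Main Theorem (1) supplies non-properness of $\w(X)$, Theorem \ref{ociso} supplies simultaneously its smoothness and the weak smooth Calabi-Yau structure $\sigma_{\oc} := \oc^{-1}(1)$, and the algebraic openness of $(\w(X), \sigma_{\oc})$ in the sense of Definition \ref{def:algebraicallyopen} is already isolated in the proof of Main Theorem (1') by combining the geometric degeneracy Lemma (Theorem \ref{degeneratecopairing}) with Corollary \ref{cor:algebracopairing}.

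For part (2), I would similarly observe that $HW^*(L,L) = H^*(\hom_{\w(X)}(L,L))$ is by definition the self-Ext of the Yoneda module associated to $L$, which is perfect as a module over $\w(X)$ (representable modules are always perfect). If $L = 0$ the group vanishes. Otherwise Main Theorem (2) shows $L$ is both left and right non-proper, which in particular forces the ambient $\w(X)$ to be non-proper (the existence of even a single non-perfect complex $\hom_{\w(X)}(K,L)$ obstructs properness of the category); Theorem \ref{ociso} again supplies the weak smooth Calabi-Yau structure on $\w(X)$; and the algebraic openness of $L$ with respect to $\sigma_{\oc}$ is exactly the property extracted in the proof of Main Theorem (2') from Theorem \ref{degeneratecopairingopenstring} together with Proposition \ref{copairingCY}.

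There is essentially no substantive obstacle: all the difficulty has been front-loaded into the Main Theorem and the geometric-algebraic copairing comparisons of Section \ref{sec:geometrytoalgebra}, and this Corollary is just a careful repackaging. The only small care required is the translation of the vanishing alternatives in the Main Theorem, phrased categorically ($\w(X) = 0$, $L = 0$), to the vanishing alternatives of the Corollary, phrased Floer-theoretically ($SH^*(X) = 0$, $HW^*(L,L) = 0$), which is immediate from the unital module structures recalled above.
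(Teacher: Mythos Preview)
Your proposal is correct and matches the paper's own treatment: the paper states this corollary as an ``immediate consequence of the Main Theorem (and the open-closed isomorphisms)'' without further proof, and your argument spells out exactly that deduction, invoking Theorem \ref{ociso} for the identification $SH^*(X)\cong \r{HH}^*(\w(X))$ and the Calabi-Yau/smoothness structure, and the Main Theorem (together with the algebraic openness extracted in its proof) for the non-properness alternative.
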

\begin{rem}\label{zeroorexactcounterexample}
    It's interesting to ask to what degree the above Corollary can shed further light on Questions \ref{infiniteorzeroSH} and \ref{infiniteorzero}. Smooth non-proper non-zero categories with finite Hochschild cohomology appear to be quite scarce in the literature, but do exist, which might suggest what to look for in an attempt to answer Question \ref{infiniteorzeroSH} in the negative.
    For instance, \cite{finitehochschild} gives an example of a proper non-smooth associative algebra $A$ with $\r{HH}^*(A)$ finite non-zero. By \cite{elaginluntsschnurer} the category of proper dg modules over such an $A$ is smooth non-proper and gives (taking the endormophisms of a (split)-generator) an example of a smooth non-proper non-zero algebra $B$ with $\r{HH}^*(B)$ finite non-zero (for general reasons $\r{HH}^*(B) = \r{HH}^*(A)$). In characteristic zero,  Weyl algebras give examples of smooth non-proper Calabi-Yau algebras with finite non-zero Hochschild cohomology (see \cite{weylHH, weylCY}).
The author is grateful to Efimov and Yeung for conversations about these examples.
\end{rem}

\begin{rem}\label{rem:nonexact}
    If $X$ is a non-exact open manifold (e.g., say $X$ has contact-type boundary at infinity) and $L \subset X$ is an open Lagrangian submanifold, the Main Theorem  should hold whenever 
    \begin{enumerate}
        \item the wrapped Fukaya category $\w(X)$, symplectic cohomology $SH^*(X)$ are defined with their open-closed structures, the wrapped Fukaya category is non-degenerate in the sense above (meaning in particular one has access to a generating collection of Lagrangian submanifolds), and further satisfy all of the structure results from \S \ref{sec:geometrytoalgebra} (including e.g., that $\w(X)$ has a weak smooth Calabi-Yau structure).

        \item $SH^*(X)$ respectively $HW^*(L,L)$ satisfy the degeneracy Lemmas (Thms. \ref{degeneratecopairing} and \ref{degeneratecopairingopenstring}).
    \end{enumerate}
Remarks \ref{rem:degeneracylemmanonexact} and \ref{rem:degeneracylemmanonexactlagrangian} examine criteria under which 
Thms. \ref{degeneratecopairing} and \ref{degeneratecopairingopenstring}
could fail or succeed in such cases.
\end{rem}

\begin{rem}
    To some degree, the structures appearing in (at least Statements (1) and (1') of the) Main Theorem are categorically dual to some of the arguments of \cite{Gautogen_arxiv} (in the sense that the roles of the finiteness conditions `smooth'/`proper' inducing `copairings/pairings' and corresponding properties of functors `fully faithful'/`localization' are interchanged). In particular in {\em loc. cit.}, one obtains, seeing as compact Fukaya categories are always proper, strong constraints/implications from having a smooth fully faithfully embedded subcategory whereas here one obtains, seeing as wrapped Fukaya categories of non-degenerate Liouville manifolds are always smooth, strong constraints/implications from having a proper quotient category. The form of those constraints depend on the structure of the respective closed string group ($QH^*(X)$ respectively $SH^*(X)$).
\end{rem}

\appendix
\section{Localizing (weak) smooth Calabi-Yau structures}\label{app:smooth}
We recall some known (partly folk) properties of the behavior of smooth categories and
(weak) smooth Calabi-Yau structures 
under localization/quotient/homological epimorphism, drawing in part
from \cite{ganatra1_arxiv, Gcircleactions, bravdyckerhoff}. 

We continue with the notation from \S \ref{sec:copairingcy}.
Recall that an
($\ainf$) functor $f: \cc \to \dd$ induces a pushforward (or extension of
scalars) functor on bimodule categories $f_*: [\cc,\cc] \to [\dd,\dd]$, defined
by taking left and right one-sided tensor products with the graph of $f$.  Up
to isomorphism (meaning isomorphism in the cohomology category), this functor
sends representables (for $(K,L)$) to representables (for $(f(K), f(L))$).
There is also a restriction of bimodules functor $f^*:=(f,f)^*: [\dd,\dd] \to
[\cc,\cc]$. While neither $f_*$ or $f^*$ typically send the diagonal to the
diagonal, there are canonical morphisms $d: \cc_{\Delta} \to f^* \dd_{\Delta}$
and $c: f_*\cc_{\Delta} \to \dd_{\Delta}$.
\begin{alem}\label{localizationlem}[Compare \cite{efimov} Cor. 3.8 and Prop 3.4, \cite{GPSsectorsoc} Lemma 3.15]
    If $f: \cc \to \dd$ is a (dg or $\ainf$) quotient functor, then the canonical morphism $c: f_*(\cc_{\Delta}) \to \dd_{\Delta}$ is an isomorphism.
\end{alem}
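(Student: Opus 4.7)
The plan is to interpret the pushforward bimodule $f_*(\cc_\Delta)$ as a kind of two-sided bar construction, and then reduce the claim to the standard fact that a quotient functor is a homological epimorphism.

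First I would unpack the construction. By definition of $f_*$ on bimodules, $f_*(\cc_\Delta)$ is computed by one-sided tensor products with the graph bimodules of $f$ (which on values are essentially $\dd(Y, f(-))$ and $\dd(f(-), Y')$). Since $\cc_\Delta$ serves as a two-sided unit for such one-sided tensor products, one obtains a quasi-isomorphism
\[
f_*(\cc_\Delta)(Y, Y') \;\simeq\; \dd(Y, f(-)) \otimes_\cc \dd(f(-), Y'),
\]
under which the canonical map $c$ is identified with the composition-in-$\dd$ map landing in $\dd(Y, Y') = \dd_\Delta(Y,Y')$. Thus the statement amounts to saying that this derived bar complex computing morphisms factoring through the image of $f$ agrees with the actual morphisms in $\dd$.

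Next I would invoke the characterization of quotient functors. In the dg or $\ainf$ setting, a quotient $f: \cc \to \dd = \cc/\mc{A}$ (where $\mc{A}$ is a full subcategory, or a collection of morphisms to invert) is by universal property a categorical localization, and a well-known equivalent characterization of such a localization is precisely that $c: f_*\cc_\Delta \to \dd_\Delta$ is a quasi-isomorphism; equivalently, the restriction functor $f^*$ on bimodules is fully faithful. This characterization of homological epimorphisms goes back to To\"en and is spelled out in the dg and $\ainf$ settings in the cited references \cite{efimov}*{Cor. 3.8, Prop. 3.4} and \cite{GPSsectorsoc}*{Lemma 3.15}, so one route is simply to apply those results. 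A direct proof would use the Drinfeld quotient description: one builds an explicit cofibrant resolution of $\cc_\Delta$ via the two-sided bar complex over $\cc$, pushes it forward along $f$, and compares the result to the analogous bar resolution of $\dd_\Delta$, using the contracting homotopies for identity morphisms of objects/morphisms in $\mc{A}$ that are formally introduced by the quotient construction to show that the comparison map is a quasi-isomorphism.

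The main obstacle is purely bookkeeping: one must ensure that the derived one-sided tensor product is correctly modeled (using cofibrant resolutions rather than naive tensor products), and that the comparison between the two bar resolutions is compatible with all of the higher $\ainf$-operations on both sides. Once the foundational setup is in place, the argument is essentially formal, with no new geometric content beyond the universal property of the quotient; the essential work has already been carried out in the references.
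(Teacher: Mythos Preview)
Your proposal is correct and aligns with the paper's treatment: the paper gives no proof of this lemma at all, instead simply citing \cite{efimov} and \cite{GPSsectorsoc} (and recalling the definition of quotient immediately afterward). Your sketch of how the pushforward unwinds to a two-sided bar construction, reducing the claim to the standard homological-epimorphism characterization of quotient functors, is exactly the content of those cited results.
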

Recall that the {\em quotient} of an $\ainf$ category $\cc$ is another $\ainf$ category $\dd$ equipped with a functor $\cc \to \dd$ which is in some sense initial among all functors sending a subcategory to zero; the definition and various explicit models are discussed in \cite{drinfeldDG,lyub1, lyub2} (the first reference in the dg case). We say $f$ is a {\em homological epimorphism} (compare \cite{efimov}) if $c: f_*\cc_{\Delta} \to \dd_{\Delta}$ is an isomorphism; Lemma \ref{localizationlem} ensures this is a weaker condition than asking $f$ be a quotient.

Recall that Hochschild homology is covariantly functorial, meaning that functors $f: \cc \to \dd$ induce pushforward maps $f_*: \r{HH}_*(\cc) \to \r{HH}_*(\dd)$.  
While Hochschild cohomology is not generally functorial, we do have covariant functoriality under (dg or $\ainf$) quotient functors:
\begin{alem}\label{localizehh}
    If $f: \cc \to \dd$ is any quotient/localization functor (or more generally a homological epimorphism), then there is an induced pushforward algebra map on Hochschild cohomology $f_{\sharp}: \r{HH}^*(\cc)\to \r{HH}^*(\dd)$, with respect to which the pushforward on Hochschild homology $f_*: \r{HH}_*(\cc) \to \r{HH}_*(\dd)$ is a map of $\r{HH}^*(\cc)$-modules.
\end{alem}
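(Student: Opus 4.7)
The plan is to construct $f_\sharp$ using the functoriality of bimodule pushforward together with the homological epimorphism hypothesis, and then verify algebra and module properties essentially by naturality.

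First, I would construct $f_\sharp$. The pushforward $f_*: [\cc,\cc] \to [\dd,\dd]$ is itself a (dg/$\ainf$) functor on bimodule categories, and thus induces on the homotopy category a map $H^*\hom_{\cc\!-\!\cc}(\cc_\Delta, \cc_\Delta) \to H^*\hom_{\dd\!-\!\dd}(f_*\cc_\Delta, f_*\cc_\Delta)$ respecting composition and units. The homological epimorphism hypothesis says exactly that $c: f_*\cc_\Delta \to \dd_\Delta$ is an isomorphism in the homotopy category of bimodules, so composition with conjugation by $c$ yields the desired $f_\sharp: \r{HH}^*(\cc) \to \r{HH}^*(\dd)$. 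That $f_\sharp$ is a unital algebra map is immediate from this description: $f_*$ preserves the Yoneda composition product on cohomology because it is a functor, conjugation by an isomorphism preserves composition on the nose, and units are preserved throughout.

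The module-map statement is where the real content lies. I would use the tensor-product model of Hochschild homology, $\r{HH}_*(\cc) = H^*(\cc_\Delta \otimes_{\cc\!-\!\cc} \cc_\Delta)$, in which the cap action of $\phi \in \r{HH}^*(\cc)$ on a class $\alpha \in \r{HH}_*(\cc)$ is represented by applying $\phi$ to one of the two diagonal tensor factors (consistent with the model in \S \ref{subsec:copairings}). The key ingredient is a natural ``lax monoidal'' comparison morphism
\[
  \mu_{\mc{M}, \mc{N}}: \mc{M} \otimes_{\cc\!-\!\cc} \mc{N} \longrightarrow f_*\mc{M} \otimes_{\dd\!-\!\dd} f_*\mc{N},
\]
built by applying $f$ slot-by-slot on the bar-complex model and functorial in both bimodule arguments, together with the observation that the pushforward on Hochschild homology itself factors as $\mu_{\cc_\Delta, \cc_\Delta}$ followed by the homotopy isomorphism $f_*\cc_\Delta \otimes_{\dd\!-\!\dd} f_*\cc_\Delta \cong \dd_\Delta \otimes_{\dd\!-\!\dd} \dd_\Delta$ induced by $c$ on each factor. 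Naturality of $\mu$ in its first bimodule slot with respect to the endomorphism $\phi$ then produces a commutative (up-to-homotopy) square relating $\phi \otimes \id$ upstairs to $f_*\phi \otimes \id$ downstairs; postcomposing with $c \otimes c$ and using that $c$ intertwines $f_*\phi$ with a representative of $f_\sharp(\phi)$ gives the desired identity $f_*(\phi \cap \alpha) = f_\sharp(\phi) \cap f_*(\alpha)$ on cohomology.

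The main obstacle I anticipate is the careful chain-level verification (particularly in the $\ainf$ setting) that the natural transformation $\mu$ exists with the claimed naturality and that its composite with $c \otimes c$ recovers the standard pushforward $f_*$ on $\r{HH}_*$. Both points are routine on the cyclic bar complex, where every operation in sight amounts to ``apply $f$ term-by-term'' to tensor factors, so one establishes them by a direct comparison of chain-level formulas; the rest of the argument is then pure functoriality. Note that the hypothesis is used only via the isomorphism property of $c$, which is why the statement holds at the generality of homological epimorphisms rather than only strict quotients.
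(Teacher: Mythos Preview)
Your proposal is correct and follows essentially the same strategy as the paper. The construction of $f_\sharp$ via the functoriality of $f_*$ on bimodules followed by conjugation by the isomorphism $c: f_*\cc_\Delta \to \dd_\Delta$ is exactly what the paper does. For the module-map assertion, the paper organizes the naturality check as a four-term commutative diagram passing through the intermediate complexes $f^*\dd_\Delta \otimes_{\cc\!-\!\cc} \cc_\Delta$ and $\dd_\Delta \otimes_{\dd\!-\!\dd} f_*\cc_\Delta$ (using the canonical map $d: \cc_\Delta \to f^*\dd_\Delta$ and the adjunction identification), whereas you package the same map as a single lax-monoidal comparison $\mu$ followed by $c\otimes c$; unwinding your $\mu$ on the bar complex recovers precisely the paper's composite of the first three horizontal arrows, so the two verifications are the same computation in different bookkeeping.
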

\begin{proof}[Proof Sketch]
    For the first statement, note $f$ induces a functor $f_*:  [\cc,\cc] \to [\dd,\dd]$ and hence a map from $\r{HH}^*(\cc)$ to (cohomological) endomorphisms of $f_* \cc_{\Delta}$. In light of Lemma \ref{localizationlem}, localization implies there is a quasi-isomorphism $c \circ (-) \circ c^{-1}: \hom_{\dd\!-\!\dd}(f_* \cc_{\Delta}, f_* \cc_{\Delta}) \to \hom_{\dd\!-\!\dd}(\dd_{\Delta}, \dd_{\Delta})$. The second assertion follows immediately from the (cohomologically) commutative diagram, for any $[\alpha] \in H^*(\cc_{\Delta} \otimes_{\cc\!-\!\cc} \cc_{\Delta}) = \r{HH}_*(\cc)$:
    \[
        \xymatrix{
            \cc_{\Delta} \otimes_{\cc\!-\!\cc} \cc_{\Delta}  \ar[d] \ar[r] &  f^*\dd_{\Delta} \otimes_{\cc\!-\!\cc} \cc_{\Delta}\ar[d] \ar[r] & 
            \dd_{\Delta}\otimes_{\cc\!-\!\cc} (f_* \cc_{\Delta}) \ar[d] \ar[r]^{c} & \dd_{\Delta} \otimes_{\dd\!-\!\dd} \dd_{\Delta} \ar[d]\\
        \cc_{\Delta} \otimes_{\cc\!-\!\cc} \cc_{\Delta} \ar[r] & f^*\dd_{\Delta} \otimes_{\cc\!-\!\cc}\cc_{\Delta} \ar[r]& \dd_{\Delta}\otimes_{\cc\!-\!\cc} (f_* \cc_{\Delta}) & \dd_{\Delta} \otimes_{\dd\!-\!\dd} \dd_{\Delta} \ar[l]^{c^{-1}}}
    \]
    where the first horizontal map is induced by the canonical map $d: \cc_{\Delta} \to f^*\dd_{\Delta}$, the second horizontal map comes from the natural equality $f^*\dd_{\Delta} \otimes_{\cc\!-\!\cc} \cc_{\Delta} \stackrel{\sim}{\leftarrow} f^*(\dd_{\Delta} \otimes_{\dd} \dd_{\Delta} \otimes_{\dd} \dd_{\Delta}) \otimes_{\cc\!-\!\cc} \cc_{\Delta} = \dd_{\Delta} \otimes_{\dd\!-\!\dd} (\dd_{\Delta} \otimes_{\cc} \cc_{\Delta} \otimes_{\cc} \dd_{\Delta}) = \dd_{\Delta} \otimes_{\dd\!-\!\dd} f_* \cc_{\Delta}$, the first and second vertical maps are induced applying $\alpha$ to the right $\cc_{\Delta}$ factor, the third vertical map induced by applying $f_*(\alpha)$, and the fourth induced by the commutative diagram, i.e., by (cohomologically) applying $c \circ f_{*}(\alpha) \circ c^{-1} = f_{\sharp}(\alpha)$.
\end{proof}

\begin{alem} \label{copairinglocalization}
    If $f: \cc \to \dd$ is a quotient functor (or more general homological epimorphism) with $\cc$ homologically smooth, then $\dd$ is smooth too, and $f_*$ sends the algebraic closed-string copairing for $\cc$ to the algebraic closed-string copairing for $\dd$.
\end{alem}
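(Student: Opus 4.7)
The plan is to deduce both claims—smoothness of $\dd$ and preservation of the closed-string copairing—from the functoriality of the pushforward bimodule functor $f_*: [\cc,\cc] \to [\dd,\dd]$ combined with the homological epimorphism isomorphism $c: f_* \cc_\Delta \stackrel{\cong}{\to} \dd_\Delta$.

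Smoothness of $\dd$ comes first. The functor $f_*$, defined as two-sided tensor product with the graph of $f$, preserves colimits, and on representables it sends $Y_{K,L}$ to $Y_{f(K), f(L)}$ up to quasi-isomorphism. Hence, if $\cc_\Delta$ is split-generated by Yoneda bimodules in $[\cc,\cc]$, then $f_*\cc_\Delta$ is split-generated by Yoneda bimodules in $[\dd,\dd]$, i.e., is perfect. The homological epimorphism hypothesis then identifies $f_*\cc_\Delta$ with $\dd_\Delta$, yielding perfectness of the latter and hence smoothness of $\dd$.

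For copairing preservation, I would invoke functoriality of Chern characters together with Morita/K\"unneth invariance of Hochschild homology. By definition $c_{alg}^\cc = ch_{\cc_\Delta}$ viewed under the isomorphism $\r{HH}_*(perf(\cc\!-\!\cc)) \cong \r{HH}_*(\cc)^{\otimes 2}$, and similarly for $\dd$.  Since Chern characters are natural along any $\ainf$ functor between categories of perfect modules, the pushforward $(f_*)_\sharp: \r{HH}_*(perf(\cc\!-\!\cc)) \to \r{HH}_*(perf(\dd\!-\!\dd))$ induced by the functor $f_*: perf(\cc\!-\!\cc) \to perf(\dd\!-\!\dd)$ sends $ch_{\cc_\Delta}$ to $ch_{f_*\cc_\Delta}$, and this in turn equals $ch_{\dd_\Delta}$ by the isomorphism invariance of Chern characters applied to $c$.

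The main obstacle, and the step I would carry out most carefully, is verifying compatibility of $(f_*)_\sharp$ with the K\"unneth/Morita identification: namely that under the identification $\r{HH}_*(perf(\cc\!-\!\cc)) \cong \r{HH}_*(\cc) \otimes \r{HH}_*(\cc)$, the map $(f_*)_\sharp$ corresponds to $f_* \otimes f_*$ (using additionally $\r{HH}_*(\cc^{op}) \cong \r{HH}_*(\cc)$). I would verify this by observing that the bilinear embedding $\cc^{op} \times \cc \hookrightarrow perf(\cc\!-\!\cc)$ is natural in $f$, fitting into a commutative square with $f^{op} \times f$ and the analogous embedding for $\dd$; applying $\r{HH}_*$ and appealing to the natural K\"unneth isomorphism for Hochschild homology yields the desired compatibility. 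Chaining the above identities then gives $(f_* \otimes f_*)(c_{alg}^\cc) = c_{alg}^\dd$ as required.
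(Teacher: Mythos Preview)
Your proposal is correct and follows essentially the same approach as the paper's own proof sketch: both argue smoothness of $\dd$ from $f_*$ sending representables to representables and the diagonal to the diagonal (via the homological epimorphism isomorphism $c$), and both deduce copairing preservation from functoriality of Chern characters under $(f_*)_*: \r{HH}_*(perf(\cc\!-\!\cc)) \to \r{HH}_*(perf(\dd\!-\!\dd))$. You are in fact more explicit than the paper on the K\"unneth compatibility step, which the paper simply flags as ``the key point needed'' without spelling it out.
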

\begin{proof}[Proof Sketch]
    Up to isomorphism $f_*: [\cc,\cc] \to [\dd,\dd]$ sends  representables to representables, hence preserves perfection. It also sends the diagonal to the diagonal 
    establishing the first statement (compare \cite{efimov}*{Cor 3.5}). This further implies that the induced map $(f_*)_*: \r{HH}_*(perf(\cc\!-\!\cc)) \to \r{HH}_*(perf(\dd\!-\!\dd))$ sends $ch_{\cc_{\Delta}}$ to $ch_{\dd_{\Delta}}$, which is the key point needed for the second statement.
\end{proof}

The following result recalls that localizations or more general homological epimorphisms out of (weak) smooth Calabi-Yau categories inherit (weak) smooth Calabi-Yau structures:
\begin{alem}\label{localizeCY}
    If $f: \cc \to \dd$ is a quotient functor (or more general homological epimorphism) with $\cc$ and $\dd$ smooth, and $\sigma_{\cc}$ is a (weak) smooth Calabi-Yau structure, then $f_*(\sigma_{\cc})$ is a (weak) smooth Calabi-Yau structure on $\dd$, and there is a commutative diagram (cohomologically) 
    \[
        \xymatrix{ \r{HH}_{*-n}(\cc) \ar[r]^{f_*}  & \r{HH}_{*-n}(\dd)  \\
        \r{HH}^*(\cc) \ar[u]^{\cap {\sigma_{\cc}}}_{\cong} \ar[r]^{f_{\sharp}} & \r{HH}^*(\dd) \ar[u]^{\cap {f_*\sigma_{\cc}}}_{\cong}}
    \]
    where $f_{\sharp}$ is the algebra map of Lemma \ref{localizehh}.
\end{alem}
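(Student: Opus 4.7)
The plan is to reduce both claims to a single naturality principle: under a homological epimorphism $f: \cc \to \dd$, pushforward of bimodules is compatible with cap product. As a technical input I would first establish, for any bimodule $\bb' \in [\dd,\dd]$, the natural identifications
\[
\r{HH}^*(\dd, \bb') \cong \r{HH}^*(\cc, f^*\bb'), \qquad \r{HH}_*(\dd, \bb') \cong \r{HH}_*(\cc, f^*\bb').
\]
Both come from pasting the hom-tensor adjunction between $f_*$ and $f^*$ with the isomorphism $\dd_{\Delta} \cong f_*\cc_{\Delta}$ of Lemma \ref{localizationlem}: explicitly, $\hom_{\dd\!-\!\dd}(f_*\cc_{\Delta}, \bb') \cong \hom_{\cc\!-\!\cc}(\cc_{\Delta}, f^*\bb')$ in cohomology and $f_*\cc_{\Delta} \otimes_{\dd\!-\!\dd} \bb' \cong \cc_{\Delta} \otimes_{\cc\!-\!\cc} f^*\bb'$ in homology.

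Next I would check, at the bar-chain level, that these identifications intertwine capping with $f_*\sigma_{\cc}$ on the $\dd$ side with capping with $\sigma_{\cc}$ (with coefficients in $f^*\bb'$) on the $\cc$ side. This is essentially immediate from the definition of $f_*\sigma_{\cc}$ as the termwise image of $\sigma_{\cc}$ under $f$ combined with the standard description of the $(f_*, f^*)$ adjunction on bar complexes. Since $\sigma_{\cc}$ is a weak smooth Calabi-Yau structure, $-\cap \sigma_{\cc}: \r{HH}^*(\cc, \bb) \to \r{HH}_{*-n}(\cc, \bb)$ is an isomorphism for \emph{every} $\cc$-bimodule $\bb$, and in particular for $\bb = f^*Y_{K,L}$ with $K,L \in \dd$. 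Unwinding, this says that $ev(f_*\sigma_{\cc})$ is a bimodule isomorphism $\dd^![n] \xrightarrow{\cong} \dd_{\Delta}$, proving $f_*\sigma_{\cc}$ is a weak smooth Calabi-Yau structure on $\dd$.

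For the commutativity of the diagram, I would specialize the same compatibility to $\bb' = \dd_{\Delta}$. The resulting identification $\r{HH}^*(\dd) \cong \r{HH}^*(\cc, f^*\dd_{\Delta})$, precomposed with the canonical map $d: \cc_{\Delta} \to f^*\dd_{\Delta}$, agrees with $f_{\sharp}$ as constructed in Lemma \ref{localizehh} (compare the diagram there whose outer arrows are $f_\sharp$ expressed via $c$ and $d$); under the analogous identification in homology, the covariant Hochschild pushforward $f_*: \r{HH}_*(\cc) \to \r{HH}_*(\dd)$ corresponds to the evident map. The intertwining of cap products from the previous paragraph then reads $f_*(\alpha \cap \sigma_{\cc}) = f_{\sharp}(\alpha) \cap f_*\sigma_{\cc}$, which is exactly the desired commutativity.

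The main obstacle is the chain-level bookkeeping needed to verify that the various adjunction and isomorphism maps really do assemble into the claimed intertwining of cap products; the combinatorics is straightforward but requires care in tracking the isomorphism $c$ of Lemma \ref{localizationlem} together with its dual $d$, and in verifying that these intertwinings are indeed compatible with $f_{\sharp}$ as defined in Lemma \ref{localizehh}. Once that check is performed, both the Calabi-Yau property of $f_*\sigma_{\cc}$ and the commutative diagram fall out of the same naturality statement, with no further geometric input beyond Lemmas \ref{localizationlem} and \ref{localizehh}.
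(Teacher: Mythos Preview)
Your approach is correct and differs from the paper's mainly in how the first assertion (that $f_*\sigma_{\cc}$ is a weak smooth Calabi-Yau structure) is established. The paper cites a commutative square from \cite{bravdyckerhoff}*{Prop.\ 4.3} relating $ev$ on $\cc$ and $\dd$ through a map $\Phi_f: H^*\hom_{\cc\!-\!\cc}(\cc^!_{\Delta},\cc_{\Delta}) \to H^*\hom_{\dd\!-\!\dd}(\dd^!_{\Delta},\dd_{\Delta})$, then factors $\Phi_f$ using the identification $f_*(\cc^!_{\Delta}) \cong (f_*\cc_{\Delta})^!$ of \cite{bravdyckerhoff}*{Lem.\ 4.2} together with the isomorphism $c$; since each factor preserves isomorphisms, $ev(f_*\sigma_{\cc})$ is an isomorphism. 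You instead set up the change-of-coefficients isomorphisms $\r{HH}^*(\dd,\bb') \cong \r{HH}^*(\cc,f^*\bb')$ and $\r{HH}_*(\dd,\bb') \cong \r{HH}_*(\cc,f^*\bb')$ and verify they intertwine capping with $f_*\sigma_{\cc}$ and $\sigma_{\cc}$, then specialize to $\bb' = Y_{K,L}$. Your route is more self-contained and has the pleasant feature that both claims---the Calabi-Yau property of $f_*\sigma_{\cc}$ and the commutative square---are instances of a single naturality statement; the paper's route is shorter by delegating the first claim to \cite{bravdyckerhoff} and reading the second claim directly off the module-map assertion of Lemma~\ref{localizehh} (which is the identity $f_*(\alpha \cap \sigma_{\cc}) = f_{\sharp}(\alpha) \cap f_*\sigma_{\cc}$ that you also end up proving). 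The chain-level bookkeeping you flag is real but routine, and is essentially equivalent in difficulty to verifying the Brav--Dyckerhoff square by hand.
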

\begin{proof}[Proof Sketch]
 We already know from Lemma \ref{localizehh} that if $\cc$ is smooth so is $\dd$. Now  
    any functor $f$ induces a commutative diagram 
    \[
        \xymatrix{ \r{HH}_*(\cc) \ar[d]^{f_*} \ar[r]^{ev\hspace{10mm}} &  H^*(\hom_{\cc\!-\!\cc}(\cc_{\Delta}^!, \cc_{\Delta})) \ar[d]^{\Phi_f} \\
        \r{HH}_*(\dd) \ar[r]^{ev\hspace{10mm}} & H^*(\hom_{\dd\!-\!\dd}(\dd^!_{\Delta}, \dd_{\Delta})) },
\] 
(see e.g., \cite{bravdyckerhoff}*{Prop 4.3}, where the results are stated for dg categories with minor notational differences; the proof is the same in the $\ainf$ case or one can simply perform a dg replacement) where
$\Phi_f$ can be computed as the composite
\[
    \hom_{\cc\!-\!\cc}(\cc^!_{\Delta}, \cc_{\Delta}) \stackrel{f_*}{\to} \hom_{\dd\!-\!\dd}(f_*(\cc_{\Delta})^!, f_*(\cc_{\Delta})) \to \hom_{\dd\!-\!\dd}(\dd_{\Delta}^!, \dd_{\Delta}) ;
\]
for the middle term space we are using \cite{bravdyckerhoff}*{Lemma 4.2} to equate $f_*(\cc_{\Delta}^!)$ with  $f_*(\cc_{\Delta})^!$ and the second arrow is induced by the canonical map $c: f_*(\cc_{\Delta}) \to \dd_{\Delta}$, which is an isomorphism by hypothesis.
Since the first arrow (the map on morphism spaces induced by a functor) always sends isomorphisms to isomorphisms, we conclude $\Phi_f$ sends isomorphisms to isomorphisms. In light of the commutative diagram, we conclude that if $\sigma \in \r{HH}_{-n}(\cc)$ is a (weak) smooth Calabi-Yau structure, then so is $f_*(\sigma) \in \r{HH}_{-n}(\dd)$.

The second statement (commutative diagram) is an immediately corollary of the assertion from Lemma \ref{localizehh} that with respect to $f_{\sharp}$, $f_*$ is a map of $\r{HH}^*(\cc)$-modules, i.e., for any $\sigma \in \r{HH}_*(\cc)$ and $\alpha \in \r{HH}^*(\cc)$, $f_*(\alpha \cap \sigma) = f_{\sharp}(\alpha) \cap f_*(\sigma)$.
\end{proof}

\begin{alem}\label{opencopairingquotient}
    In the setting of Lemma \ref{localizeCY}, applying $f$ to morphism spaces sends the open-string algebraic copairing $c^{\sigma}_{K,L}$ on $(\cc, \sigma_\cc)$ to the open-string algebraic copairing $c^{f_*\sigma_\cc}_{fK,fL}$ on $(\dd, f_*\sigma)$.
\end{alem}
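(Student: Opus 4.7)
The plan is to establish a general naturality statement for the copairing shadow construction of \S \ref{subsec:copairings}, and then specialize it to $g = ev(\sigma)^{-1}$ using the identity $\Phi_f(ev(\sigma)^{-1}) = ev(f_*\sigma)^{-1}$ that emerges from the proof of Lemma \ref{localizeCY}. Concretely, I claim that for every closed bimodule morphism $g \in \hom^n_{\cc\!-\!\cc}(\cc_\Delta, \cc^![n])$, the pushforward on hom spaces $f_{K,L} \otimes f_{L,K}: H^*\hom_\cc(K,L) \otimes H^*\hom_\cc(L,K) \to H^*\hom_\dd(fK,fL) \otimes H^*\hom_\dd(fL,fK)$ sends the copairing shadow $c^g_{K,L}$ to $c^{\Phi_f(g)}_{fK,fL}$, where $\Phi_f(g)$ denotes the image of $g$ under the map $\hom_{\cc\!-\!\cc}(\cc_\Delta, \cc^![n]) \to \hom_{\dd\!-\!\dd}(\dd_\Delta, \dd^![n])$ built from $f_*$ together with the natural identifications $f_* \cc_\Delta \cong \dd_\Delta$ (from Lemma \ref{localizationlem}, using the homological epimorphism hypothesis) and $f_* \cc^! \cong (f_* \cc)^! \cong \dd^!$ (appealing to \cite{bravdyckerhoff}*{Lemma 4.2} as in the proof of Lemma \ref{localizeCY}).

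To prove the claim, I would unfold the definition $c^g_{K,L} = (g_{L,L}(id_L))_{K,K}(id_K)$ and exploit two basic compatibilities. First, cohomological unitality of the functor $f$ guarantees $f[id_L] = [id_{fL}]$ and $f[id_K] = [id_{fK}]$, so the choices of cohomological units are intertwined by $f$. Second, pushforward of bimodules commutes with evaluation at a pair of objects and with the insertion of identity morphisms used in the bar-model formula for the copairing shadow: for any bimodule $\bb$, the canonical map $\bb(K,L) \to (f_* \bb)(fK,fL)$ given by inserting units $id_{fK} \otimes - \otimes id_{fL}$ is natural in bimodule morphisms, and it sends representables $\hom_\cc(-,L) \otimes \hom_\cc(L,-)$ to the corresponding $\hom_\dd(-,fL) \otimes \hom_\dd(fL,-)$ compatibly with the identity-insertion step. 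Tracing through the two-step evaluation of $g$ at $(L,L)$ followed by evaluation at $(K,K)$ using these compatibilities immediately gives the claimed identity $(f_{K,L} \otimes f_{L,K})(c^g_{K,L}) = c^{\Phi_f(g)}_{fK,fL}$.

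Finally, I specialize this naturality to $g = ev(\sigma)^{-1}$. By the commutative diagram in the proof of Lemma \ref{localizeCY}, the composite $\Phi_f$ sends $ev(\sigma)$ to $ev(f_* \sigma)$; since $\Phi_f$ takes cohomological isomorphisms to cohomological isomorphisms (being built from a functor on morphism spaces and natural identifications of bimodules), it also sends $ev(\sigma)^{-1}$ to $ev(f_* \sigma)^{-1}$. Combining with the naturality claim yields $(f_{K,L} \otimes f_{L,K})(c^\sigma_{K,L}) = c^{f_* \sigma}_{fK,fL}$, which is the statement of the lemma. The main obstacle is the chain-level bookkeeping of the identifications $f_* \cc^! \cong \dd^!$ and $f_* \cc_\Delta \cong \dd_\Delta$ through the two-step copairing shadow construction; this is analogous to (though substantially simpler than) the diagram chase carried out in the proof of Proposition \ref{properobjectcopairingalgebra}, and can be handled by working in the bar model throughout, as in that earlier argument.
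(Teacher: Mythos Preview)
Your proposal is correct and follows essentially the same approach as the paper: the paper's proof also reduces to cohomological unitality of $f$ together with a commutative diagram whose left square encodes $\Phi_f(ev(\sigma_\cc)^{-1}) = ev(f_*\sigma_\cc)^{-1}$ (via $f_*$ and the identifications $f_*\cc_\Delta \cong \dd_\Delta$, $f_*\cc^! \cong \dd^!$) and whose right square is the naturality of the copairing-shadow construction under $f_*$. Your decomposition into a general naturality statement for copairing shadows followed by specialization to $g = ev(\sigma)^{-1}$ is a slightly more explicit packaging of the same two squares.
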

\begin{proof}
    All such $f$ considered are cohomologically unital by convention, i.e., $f$ sends $[id_L]$ to $[id_{fL}]$. Hence, the Lemma
    is an immediate consequence of verifying the commutative diagram, for any pair of objects (applying $f_*$ and using the isomorphism between $f_*\cc$ and $\dd$ and between $f_* \cc^!$ and $\dd^!$ for the two leftmost vertical arrows):
\[
    \xymatrix@R+1pc@C+2pc{ 
        H^*(\cc_{\Delta}(K,K)) \ar[r]^{ (ev(\sigma_{\cc}))_{K,K}^{-1}} \ar[d] & H^*(\cc^!(K,K)) \ar[r] \ar[d] & H^*(\hom_{\cc}(K,L)) \otimes H^*(\hom_{\cc}(L,K)) \ar[d]^{[f] \otimes [f]} \\
        H^*(\dd_{\Delta}(fK,fK)) \ar[r]^{ (ev(f_*\sigma_{\cc}))_{fK,fK}^{-1}} & H^*(\dd^!(fK,fK)) \ar[r] & H^*(\hom_{\dd}(fK,fL)) \otimes H^*(\hom_{\dd}(fL,fK)). 
    }
\]
\end{proof}

\bibliography{nonproper}
\bibliographystyle{alpha}

\end{document}